\numberwithin{equation}{section}
\def\re{\operatorname{Re}}
\def\im{\operatorname{Im}}
\def\arg{\operatorname{arg}}
\def\dist{\operatorname{dist}}
\def\sing{\operatorname*{sing}}
\def\interior{\operatorname*{int}}
\def\exterior{\operatorname*{ext}}
\def\N{\mathbb{N}}
\def\Z{\mathbb{Z}}
\def\R{\mathbb{R}}
\def\C{\mathbb{C}}
\def\H{\mathbb{H}}
\def\D{\mathbb{D}}
\def\Speiser{\mathcal{S}}
\def\EL{\mathcal{B}}
\newtheorem{theorem}{Theorem}[section]
\newtheorem{lemma}[theorem]{Lemma}
\newtheorem{proposition}{Proposition}[section]
\theoremstyle{definition}
\theoremstyle{remark}
\newtheorem{remark}[theorem]{Remark}
\newtheorem{example}[theorem]{Example}
\newtheorem*{ack}{Acknowledgment}
\begin{document}  
\title{On the 
boundary of an immediate attracting basin of a hyperbolic entire function}
\author{Walter Bergweiler\orcidlink{0000-0001-5345-1831} and Jie Ding\thanks{The second author gratefully acknowledges support by the CSC (202206935015) and the Fundamental Research Program of Shanxi Province (202103021224069).} \orcidlink{0000-0003-3023-3852}}
\date{}
\maketitle
\begin{abstract}
Let $f$ be a transcendental entire function of finite order 
which has an attracting periodic point $z_0$ of 
period at least~$2$. Suppose that the set of singularities of the inverse of $f$ is finite
and contained in the component $U$ of the Fatou set that contains $z_0$.
Under an additional hypothesis we show that the intersection of $\partial U$ with
the escaping set of $f$ has Hausdorff dimension~$1$.
The additional hypothesis is satisfied for example if $f$ has the form 
$f(z)=\int_0^z p(t)e^{q(t)}dt+c$ with polynomials $p$ and $q$ and  a constant~$c$.
This generalizes a result of Bara\'nski, Karpi\'nska and Zdunik dealing with the case
$f(z)=\lambda e^z$. 

\smallskip
MSC 2020: 37F35; 37F10; 30D05.
\end{abstract}
\section{Introduction}\label{intro}
The \emph{Fatou set} $F(f)$ of an entire function $f$ is the set of all points in $\C$
where the iterates $f^n$ of $f$ form a normal family.
The \emph{Julia set} $J(f)$ is the complement of $F(f)$ and 
the \emph{escaping set} $I(f)$ is the set of all points $z$ in $\C$ for 
which $|f^n(z)|\to\infty$ as $n\to\infty$.
See \cite{Bergweiler1993,Schleicher2010} for an introduction to transcendental dynamics.

We say that $z_0\in\C$ is a \emph{periodic point} of period $p\in\N$
if $f^p(z_0)=z_0$, but $f^k(z_0)\neq z_0$ for $1\leq k\leq p-1$.
Of course, a periodic point of period $1$ is also called a \emph{fixed point}.
A periodic point $z_0$ of period $p$ is called \emph{attracting} if $|(f^p)'(z_0)|<1$.
In this case there exists a neighborhood $U$ of $z_0$ such that,
uniformly for $z\in U$, we have $f^{kp}(z)\to z_0$ as $k\to\infty$.
This easily yields that $z_0\in F(f)$. The set of all points $z$ such that 
$f^{kp}(z)\to z_0$ as $k\to\infty$ is called the \emph{basin of attraction} of $z_0$
and denoted by $A(z_0,f)$. 
The component of $A(z_0,f)$ that contains $z_0$ is called the \emph{immediate
basin of attraction} of $z_0$ and denoted by $A^*(z_0,f)$. 

The dynamics of the functions $E_\lambda(z):=\lambda e^z$ have been intensely studied;
see, e.g., \cite{Devaney2010,Rempe2006,Schleicher2003}.
If $E_\lambda$ has an attracting fixed point $z_0$, which is the case  for example if
$0<\lambda<1/e$, then $A^*(z_0,E_\lambda)=A(z_0,E_\lambda)=F(E_\lambda)$
and $\partial A^*(z_0,E_\lambda)=J(E_\lambda)$. 
McMullen~\cite[Theorem~1.2]{McMullen1987} showed that $\dim J(E_\lambda)=2$ for all $\lambda\in\C\setminus\{0\}$.
Here and in the following
$\dim X$ denotes the Hausdorff dimension of a set $X$.
We conclude that if $E_\lambda$ has an attracting fixed point $z_0$, then
$\dim\partial A^*(E_\lambda,z_0)=2$.

Bara\'nski, Karpi\'nska and Zdunik \cite{Baranski2010} showed that 
the last equality does not hold for attracting periodic points of higher period:
If $E_\lambda$ has an attracting periodic point $z_0$ of period $p\geq 2$, then
$\dim\partial A^*(z_0,E_\lambda)<2$.
One ingredient in their proof is a result of Urba\'nski and Zdunik~\cite[Theorem 6.1]{Urbanski2003}
which says that if $E_\lambda$ has an attracting periodic point, then 
$\dim (J(E_\lambda)\setminus I(E_\lambda))<2$.
Thus in order to prove that $\dim\partial A^*(z_0,E_\lambda)<2$,
Bara\'nski, Karpi\'nska and Zdunik only had to consider the dimension of
$\partial A^*(z_0,E_\lambda)\cap I(E_\lambda)$.
They showed \cite[Theorem~B]{Baranski2010} that 
if $z_0$ is an attracting periodic point of period at least $2$
of $E_\lambda$, then $\dim (A^*(z_0,E_\lambda)\cap I(E_\lambda))=1$.

The purpose of this paper is to extend  this result to a larger class of functions.
To define this class, recall that the set $\sing(f^{-1})$ of singularities of the
inverse of a transcendental entire function $f$ consists of the critical and (finite) asymptotic
values of~$f$. 
The \emph{Eremenko--Lyubich class} $\EL$ consists of all transcendental entire functions
$f$ for which $\sing(f^{-1})$ is bounded. The subclass $\Speiser$ of functions $f$
for which $\sing(f^{-1})$ is finite is called the \emph{Speiser class}.
These classes play an important role in transcendental dynamics;
see the survey by Sixsmith~\cite{Sixsmith2018}.

One reason why functions in these classes have been studied with great success
is the \emph{logarithmic change of variable}, introduced by Eremenko and 
Lyubich~\cite{Eremenko1992} to transcendental dynamics.
We describe it briefly. Here and in the following 
we will use, for $a\in\C$, $r>0$ and $t\in\R$, the notation 
\begin{equation} \label{e1}
D(a,r):=\{z\in\C\colon |z-a|<r\} 
\quad\text{and}\quad
\H_{>t} :=\{z\in\C\colon \re z>t\} .
\end{equation}
Of course, $\H_{\geq t}$, $\H_{<t}$ and $\H_{\leq t}$
are defined analogously.

For $f\in\EL$ we choose $R>\max\{1,|f(0)|\}$ such that $\sing(f^{-1})\subset D(0,R)$.
Put $s:=\log R$, $\Delta:=\{z\in\C\colon |z|>R\}$ and $T:=\exp^{-1}(f^{-1}(\Delta))$.
It can then be shown that there exists a holomorphic map $F\colon T\to \H_{>s}$ such that 
$f(e^z)=\exp F(z)$ for all $z\in T$.
Moreover, for every connected component $L$ of $T$ the map
$F\colon L\to \H_{>s}$ is biholomorphic.
This allows to apply results about univalent functions to $F$.
A particularly useful result is the estimate
\begin{equation} \label{e2}
|F'(w)|\geq \frac{1}{4\pi}(\re F(w)-s)
\quad\text{for}\ w\in T,
\end{equation}
which can be obtained by applying the Koebe one quarter theorem to the inverse
of $F\colon L\to \H_{>s}$.

The function $F$ is called the \emph{logarithmic transform} of $f$.
We also say that $F$ is obtained from $f$ by a logarithmic change of variable.
We will explain this logarithmic change of variable in more detail in \S\ref{log-change}.
The reason for this more detailed discussion is that 
we need to extend the domain of definition of $F$ to a larger set than above. Essentially, we 
want to achieve that $\exp T$ contains $J(f)$ so that we can define the Julia set
of $F$ by $J(F)=\exp^{-1} J(f)$.

It follows from~\eqref{e2} that if $\re F(w)\geq 2s$, then
$|F'(w)|\geq \re F(w)/(8\pi)$. 
For our method we will need an estimate of $|F'(w)|$ in terms 
of $|F(w)|$ rather than $\re F(w)$. 
We will thus make the additional hypothesis that 
there exist $\beta>0$ and $t>0$ such that 
\begin{equation} \label{b2}
|F'(w)|\geq \beta |F(w)|
\quad \text{if}\ \re F(w)\geq t.
\end{equation}
To rewrite these conditions in terms of $f$,
recall that
$F(w)=\log f(e^w)$
with some branch of the logarithm.
Thus
\begin{equation} \label{v0a}
F'(w)=\frac{e^wf'(e^w)}{f(e^w)} .
\end{equation}
With $z=e^w$ the inequalities~\eqref{e2} and~\eqref{b2} hence take the form
\begin{equation} \label{e2a}
\left|\frac{zf'(z)}{f(z)}\right|
\geq \frac{1}{4\pi}(\log |f(z)|-s)
\quad\text{if}\ |f(z)|>R=e^s
\end{equation}
and
\begin{equation} \label{b2a}
\left|\frac{zf'(z)}{f(z)}\right|
\geq \beta |\log f(z)|
\quad \text{if}\ |f(z)|\geq e^t.
\end{equation}
Thus our additional hypothesis involves not only the modulus but also the
argument of $f(z)$.

We will see in Proposition~\ref{prop1} that this hypothesis is satisfied for example if $f$ has 
the form 
\begin{equation} \label{b3}
f(z)=\int_0^z p(t)e^{q(t)}dt+c
\end{equation}
with polynomials $p$ and $q$ and $c\in\C$.
It is also easy to see that~\eqref{b2} holds for functions $f$ of
 the form $f(z)=a\cos z+b$ with $a,b\in\C$, $a\neq 0$.

An entire function $f$ is said to be of \emph{finite order} if there exists $\mu>0$ such that 
\begin{equation} \label{e4}
|f(z)|\leq \exp|z|^{\mu}
\end{equation}
if $|z|$ is sufficiently large.
The infimum of all these $\mu$ is called the \emph{order} of $f$ and denoted by $\rho(f)$.

Besides the escaping set $I(f)$ we will, for $M>0$, also consider the set
\begin{equation} \label{b0b}
I(f,M):=\left\{z \in {\mathbb C}\colon \liminf_{n\to \infty}|f^n(z)|\geq M\right\}.
\end{equation}
Note that
\begin{equation} \label{b0c}
  I(f)=\bigcap_{M>0}I(f,M).
\end{equation}

\begin{theorem}\label{thm1}
Let $f\in\Speiser$ be of finite order and suppose that the logarithmic transform $F$
of $f$ satisfies~\eqref{b2}.
Suppose that $f$ has an attracting periodic point $z_0$ of period $p\geq 2$.
For $1\leq j\leq p-1$, put $z_j:=f^j(z_0)$.
Suppose that there exists $j\in\{0,\dots,p-1\}$ such that  $\sing(f^{-1})\subset A^*(z_j,f)$.
Then
\begin{equation} \label{b4}
\lim_{M\to\infty} 
\dim (\partial A^*(z_j,f)\cap I(f,M)) =1 
\end{equation}
for all $j\in\{0,\dots,p-1\}$.
\end{theorem}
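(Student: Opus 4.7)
Write $E_{j,M}:=\partial A^*(z_j,f)\cap I(f,M)$; the goal is to show $\lim_{M\to\infty}\dim E_{j,M}=1$ for every $j$. First I would reduce to a single index, say $j=0$. Since $\sing(f^{-1})$ is finite and contained in one immediate basin, $f\colon A^*(z_j,f)\to A^*(z_{j+1},f)$ is a proper map of bounded degree, and no critical points lie on $\partial A^*(z_j,f)$; the restriction of $f$ to the boundary is therefore a local biholomorphism and, via a countable covering by compacta, preserves Hausdorff dimension, so the limiting statement is the same for each $j$.

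For the lower bound I would use that $f\in\Speiser$ is of finite order and that $\sing(f^{-1})$ lies in the Fatou set. Results on the Cantor bouquet structure of Julia sets of finite-order Speiser functions (Rottenfu\ss er, R\"uckert, Rempe-Gillen, Schleicher) provide hairs, i.e.\ injective curves in $J(f)\cap I(f)$, accumulating on $\partial A^*(z_0,f)$; since $\partial A^*(z_0,f)$ is forward invariant and itself contains escaping orbits, one obtains a hair inside $\partial A^*(z_0,f)$, and truncating its compact initial piece contained in $\{|z|<M\}$ leaves a continuum in $E_{0,M}$ of Hausdorff dimension at least $1$.

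The upper bound is the main work. I would lift to the logarithmic coordinate through the extended transform $F\colon T\to\H_{>s}$, arranged so that $\exp T\supset J(f)$, and set $\widetilde E_{0,M}:=\exp^{-1}(E_{0,M})\cap J(F)$. For $w\in\widetilde E_{0,M}$ the iterates $F^n(w)$ eventually have real part exceeding $\log M$, so~\eqref{b2} applies at every relevant step, yielding strong expansion along the whole orbit tail. Next I would cover $\widetilde E_{0,M}$ by iterated $F$-preimages of a single fundamental rectangle in $\H_{>\log M}$, split along the tracts of $F$; the hyperbolicity coming from $\sing(f^{-1})\subset A^*(z_0,f)$ together with $p\geq 2$ (which keeps $\partial A^*(z_0,f)$ bounded away from the postsingular set) allows Koebe's theorem to be applied on every branch with uniformly bounded distortion. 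A Bowen/Moran-type series pitting the expansion factor $\beta|F^k(w)|\geq\beta M$ from~\eqref{b2} against the number of tracts of $F$ in a strip of height one at real part $r$, which grows only polynomially in $e^r$ by finite order, then gives $\dim\widetilde E_{0,M}\leq 1+\varepsilon(M)$ with $\varepsilon(M)\to 0$ as $M\to\infty$. Since $\exp$ is bi-Lipschitz on horizontal strips of bounded height, this transfers to $E_{0,M}$ and, by the reduction in the first paragraph, to every $j$.

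The main obstacle is calibrating the series estimate so that $\varepsilon(M)\to 0$ rather than merely staying bounded. This requires three ingredients to cooperate: a tract-count bound that restricts attention only to tracts actually visited by orbits remaining in $\H_{>\log M}$, Koebe distortion control uniformly across arbitrarily long orbit words, and the use of~\eqref{b2} not just as a pointwise expansion but as a uniform contraction factor of size $(\beta M)^{-1}$ per pullback. Making these three quantitative inputs fit together, with constants independent of orbit length, is where the technical heart of the argument will lie.
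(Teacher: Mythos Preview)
Your upper-bound sketch has a genuine gap: it never uses the combinatorial constraint that distinguishes $\partial A^*(z_j,f)$ from the full Julia set. A generic ``expansion versus tract count'' series cannot give $\dim\leq 1+\varepsilon(M)$, because for finite-order $f\in\EL$ the set $J(f)\cap I(f,M)$ already has Hausdorff dimension~$2$ (McMullen, Bara\'nski, Schubert). So the restriction to $\partial A^*(z_j,f)$ must enter the covering estimate in an essential way, and in your outline it does not.

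What the paper actually exploits is an \emph{itinerary restriction}. In logarithmic coordinates the immediate basins $A_1,\dots,A_{p-1}$ each lie in a single strip $S(m_j)$, because $\sing(f^{-1})$ sits in just one basin and $p\geq 2$; hence any $w\in\partial A_1$ satisfies $F^k(w)\in S(m_j)$ whenever $k\equiv j-1\pmod p$ for $j\in\{1,\dots,p-1\}$. Thus in each block of $p$ iterates, $p-1$ of the itinerary entries are \emph{fixed}, and only one is free. When you pull back a disk $D(a,r)$ by $F^{-p}$ restricted to $\partial A_1$, only one of the $p$ steps branches into countably many pieces (one per $k\in\Z$); the remaining $p-1$ steps each use a \emph{single} inverse branch. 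Hypothesis~\eqref{b2} is applied precisely at the step following the branching one: the preimages sit at points with $|\im|\sim 2\pi|k|$, so $|G_*'(u)|\leq 1/(\beta|u|)\lesssim 1/(\log M+|k|)$, and the resulting series $\sum_{k\in\Z}(\log M+|k|)^{-\alpha}$ converges for every $\alpha>1$. Iterating this $p$-step covering lemma gives $\dim\leq\alpha$ for all $\alpha>1$. Your proposal treats every pullback step symmetrically and tries to balance expansion against a tract count (which, incidentally, is bounded for finite-order $f$, not polynomially growing); that balance is simply not available here.

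Two smaller points: your reduction claim that $f\colon A^*(z_j,f)\to A^*(z_{j+1},f)$ is ``proper of bounded degree'' is false in general---when $\sing(f^{-1})\subset A^*(z_{j+1},f)$ this map is an infinite covering---though the dimension transfer still works because $f$ is locally bi-Lipschitz on $J(f)$. And your lower-bound argument (``hairs accumulating on $\partial A^*$ give a hair inside $\partial A^*$'') needs the itinerary idea too: the paper builds a set with the prescribed periodic itinerary directly and then shows it lies in $\partial A_1$ by pulling back nearby basin points.
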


Of course, it follows from \eqref{b0c} and \eqref{b4} that
\begin{equation} \label{b5}
\dim (\partial A^*(z_j,f)\cap I(f)) =1 .
\end{equation}

A function $f\in \EL$ for which the closure of $\sing(f^{-1})$ is contained in
attracting basins is called \emph{hyperbolic}.
If $f$ is hyperbolic, then $F(f)$ consists of finitely many attracting basins;
see, e.g., \cite[Proposition~2.1]{Bergweiler2015}.
A standard result in complex dynamics \cite[Theorem~7]{Bergweiler1993} says that 
a periodic cycle of immediate attracting basins contains a singularity of the inverse.
We conclude that if $f$ satisfies the hypotheses of Theorem~\ref{thm1},
then $F(f)=\bigcup_{j=0}^{p-1} A(z_j,f)$. 
Thus every component of $F(f)$ is mapped to $A^*(z_0,f)$ by some iterate of~$f$.
This implies that $\dim (\partial U\cap I(f))=1$ for every component $U$ of $F(f)$.

We just mentioned the result that if an entire function $f$ has an attracting periodic
point $z_0$ of period $p$, and we put $z_j=f^j(z_0)$ as above, then 
\begin{equation} \label{b6}
\sing(f^{-1})\cap \bigcup_{j=0}^{p-1} A^*(z_j,f)\neq\emptyset . 
\end{equation}
The essential hypothesis in Theorem~\ref{thm1} is that there exists 
$j\in\{0,\dots,p-1\}$ such that  $\sing(f^{-1})\subset A^*(z_j,f)$.
We will show by an example that without this hypothesis the conclusion need not hold.
Let
\begin{equation} \label{b7}
f_\lambda(z):=\lambda\int_0^z \exp(-t^2) dt,
\end{equation}
with $\lambda\in\R\setminus\{0\}$.
\begin{example} \label{ex1}
The  function $f_{-2}$ has an attracting periodic 
point $z_0\approx 1.7487\dots$ of period~$2$, with $z_1:=f_{-2}(z_0)=-z_0$,
such that 
\begin{equation} \label{b8}
\dim(\partial A^*(z_j,f_{-2})\cap I(f_{-2}))=2
\end{equation}
for $j\in\{0,1\}$.
Moreover, $\sing(f_{-2}^{-1})=\{\pm \sqrt{\pi}\}$, with $\sqrt{\pi}\in A^*(z_0,f_{-2})$ and
$-\sqrt{\pi}\in A^*(z_1,f_{-2})$.
\end{example} 

The left picture in Figure~\ref{fig1} shows the attracting basins of the function
$f_{-2}$ from Example~\ref{ex1}.
\begin{figure}[htb]
\centering
\begin{overpic}[width=0.47\textwidth]{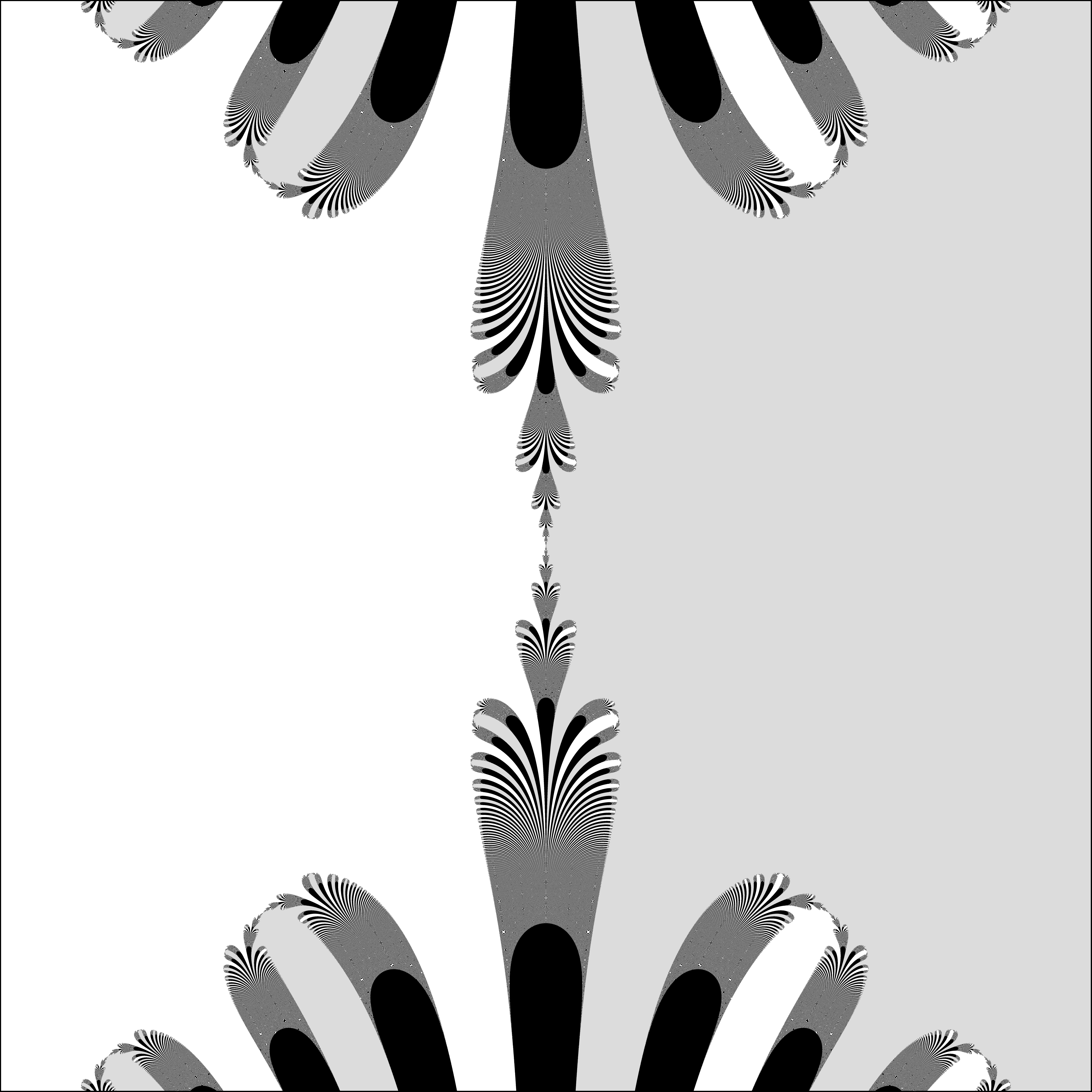}
\put(19.11,50){\circle*{1}}
\put(20.11,47){$z_1$}
\put(80.89,50){\circle*{1}}
\put(81.89,47){$z_0$}
\end{overpic}
\quad
\begin{overpic}[width=0.47\textwidth]{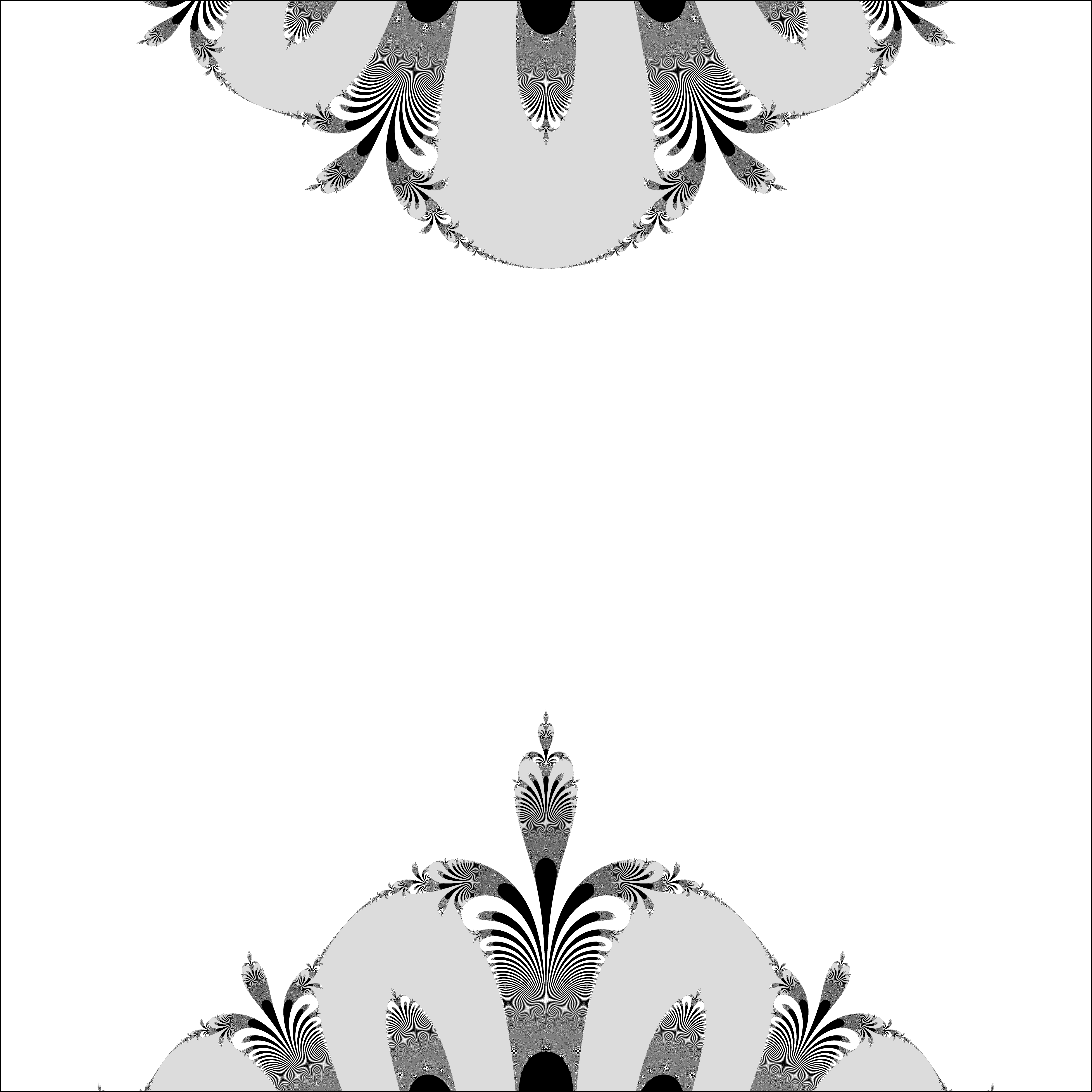}
\put(50,64.05){\circle*{1}}
\put(51,61.05){$z_0$}
\put(50,81.47){\circle*{1}}
\put(51,78.47){$z_1$}
\end{overpic}
\caption{Attracting basins for the function from Example \ref{ex1}.}
\label{fig1}
\end{figure}
The right picture shows the attracting basins of $f:=f_{-0.14}+1.9i$, which has the attracting
periodic point $z_0\approx 0.7868 i$ of period~$2$, with $z_1:=f(z_0)\approx 1.7621 i$.
This example was considered by Morosawa~\cite{Morosawa2004}; see \S\ref{veri} for a
discussion of his work.
For this function $f$ we have 
$\sing(f^{-1})=\{1.9i\pm 0.07\sqrt{\pi}\}\subset A^*(z_1,f)$
so that the hypotheses of Theorem~\ref{thm1} are satisfied.
In both pictures the range shown is $|\re z|\leq 2.8$, $|\im z|\leq 2.8$.

Figure~\ref{fig2} shows attracting basins for functions $f$ of the form
$f(z)=a\cos z+b$,
with $a$ and $b$ chosen such that the hypotheses of Theorem~\ref{thm1} are satisfied.
\begin{figure}[htb]
\centering
\begin{overpic}[width=0.47\textwidth]{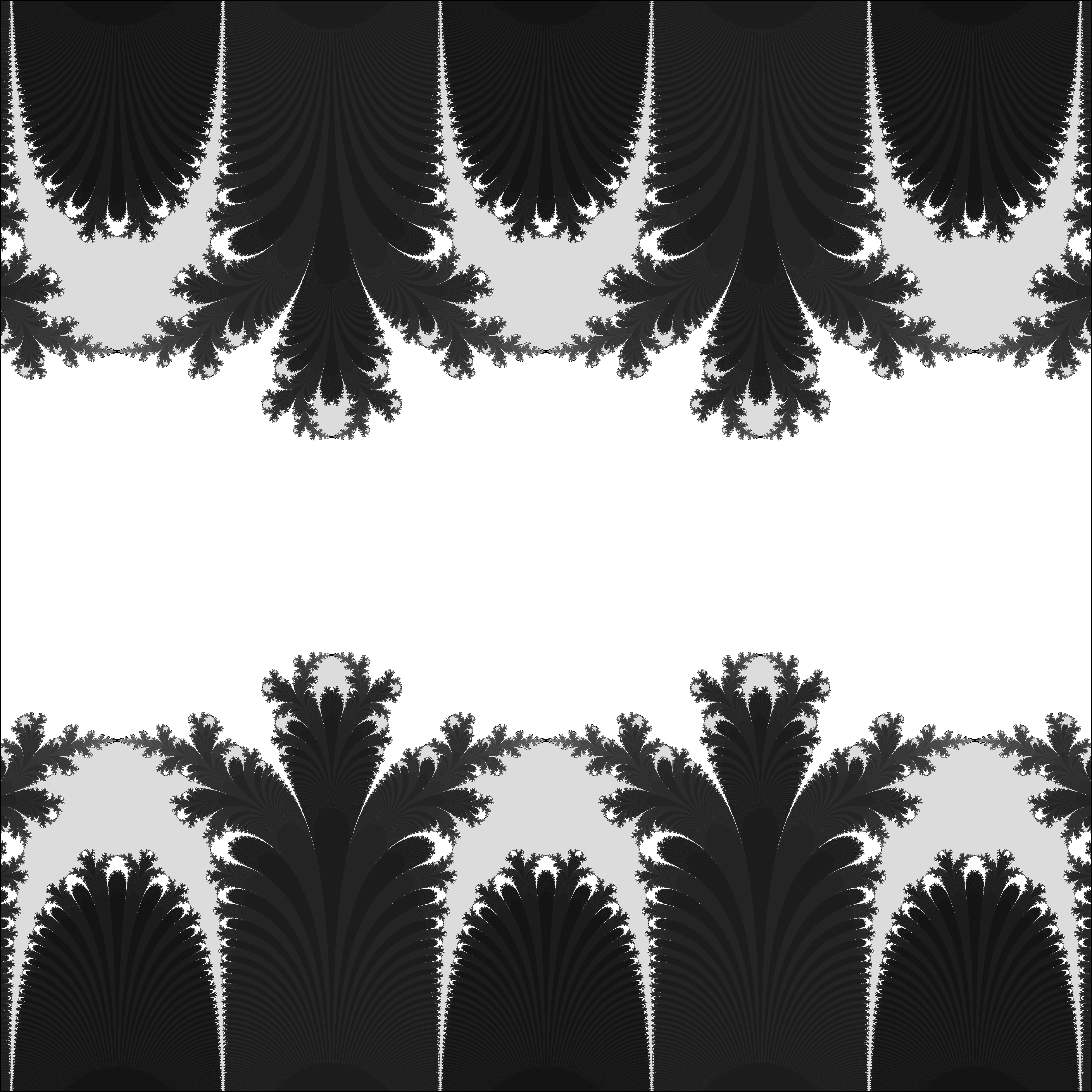}
\put(50,50.34){\circle*{1}}
\put(50.3,47.84){$z_0$}
\put(50,75.31){\circle*{1}}
\put(50.3,72.81){$z_1$}
\end{overpic}
\quad
\begin{overpic}[width=0.47\textwidth]{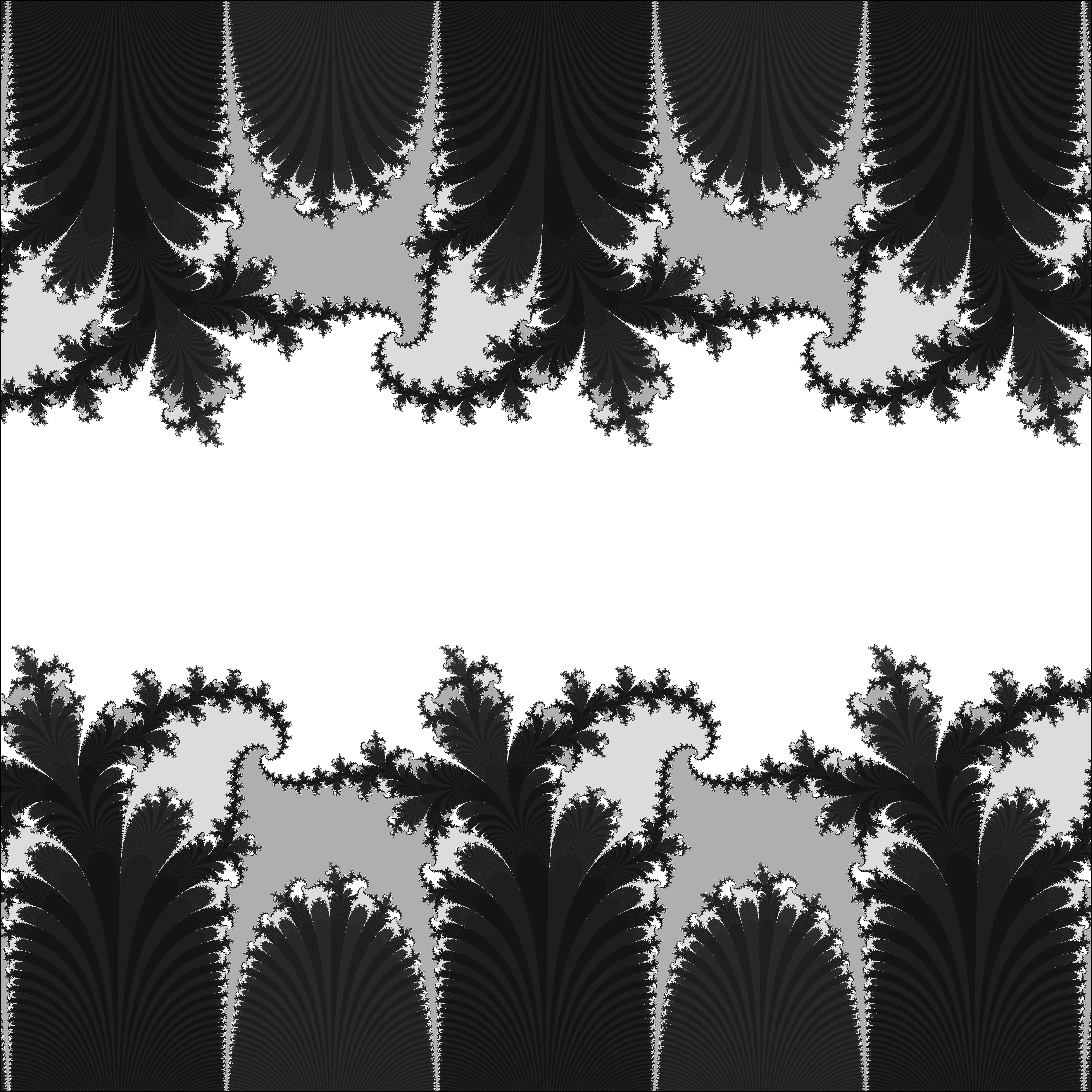}
\put(51.04,48.57){\circle*{1}}
\put(51.34,46.07){$z_0$}
\put(58.148,26.242){\circle*{1}}
\put(54.825,28.25){$z_1$}
\put(71.6,23.18){\circle*{1}}
\put(71.9,20.68){$z_2$}
\end{overpic}
\caption{Attracting basins of some cosine maps.}
\label{fig2}
\end{figure}
In the left picture we have $a=-0.15i$ and $b=4.15 i$. 
Here $f$ has an attracting periodic point $z_0\approx -0.05463 i$ of period~$2$.
In the right picture, $a=-0.1i$ and $b=1.3-3.7i$, and 
$f$ has an attracting periodic point $z_0\approx 0.1662-0.2292 i$ of period~$3$.
In both cases, 
\begin{equation} \label{b9}
\sing(f^{-1})=\{b+a,b-a\}\subset A^*(z_1,f)
\end{equation}
so that the hypotheses of Theorem~\ref{thm1} are satisfied.
In both pictures the range shown is $|\re z|\leq 8$, $|\im z|\leq 8$.

It follows from~\cite[Theorem 1.4]{Bergweiler2015} that if $f(z)=a\cos z+b$ is hyperbolic and
the two critical values $b\pm a$ are in different components of $F(f)$, then all 
components of $F(f)$ are bounded quasidisks. 
In particular, the boundary of an immediate attracting basin does not intersect the 
escaping set.

We do not know whether under the hypotheses of Theorem~\ref{thm1} we have
$\dim\partial A^*(z_j,f)<2$.
As we already mentioned, this was proved by 
Bara\'nski, Karpi\'nska and Zdunik \cite{Baranski2010} for the case that $f=E_\lambda$,
using the work of Urba\'nski and Zdunik~\cite{Urbanski2003}.

We remark that if $f$ is hyperbolic and $M>0$, then
\begin{equation} \label{i1}
\dim\!\left\{z\in J(f) \colon \limsup_{n\to\infty} |f^n(z)|\leq  M \right\} <2.
\end{equation}
To see this, we note that one way to prove that the Julia set of a hyperbolic rational 
function has Hausdorff dimension less than $2$ is to show that it is porous.
See the book by Berteloot and Mayer~\cite[Section~VI.3]{Berteloot2001} for the 
details of this argument.
Essentially the same reasoning shows that the set occurring in~\eqref{i1} is 
porous and thus has Hausdorff dimension less than~$2$,
for each $M>0$. However, the estimate obtained is not uniform in $M$,
and the upper bound will usually tend to $2$ as $M\to\infty$.

But even if there was a uniform bound in~\eqref{i1} independent of~$M$, this 
would not yield that $\dim\partial A^*(z_j,f)<2$.
To achieve this one would also have to estimate the dimension of the 
intersection of $\partial A^*(z_j,f)$ with the so-called \emph{bungee set}
consisting of the points $z$ for which 
$\liminf_{n\to\infty} |f^n(z)|<\infty$ and
$\limsup_{n\to\infty} |f^n(z)|=\infty$.

We also do not know whether the hypothesis \eqref{b2} is essential.

The basic idea in the proof of Theorem~\ref{thm1} is the same as in~\cite{Baranski2010}:
Points in the boundary of the attracting basin have ``itineraries'' of a particular form.
Here we use this approach for the logarithmic transform.
Thus, after some preliminaries in \S\ref{hyp-geom}, we introduce the logarithmic 
transform in \S\ref{log-change} and consider attracting basins and 
 itineraries for it in \S\ref{att-basin}. 
In \S\ref{geom-tract} we discuss some results on the geometry of the domains where the logarithmic 
transform is defined and in \S\ref{quasi} we introduce a quasiconformal modification of~$f$.
The proof of the upper bound for the dimension is then given in \S\ref{hausdimI}. 
The much simpler proof of the lower bound follows in \S\ref{hausdimI2}.
As already mentioned, condition~\eqref{b2} is discussed in \S\ref{hypo-thm1}.
Finally, we verify in \S\ref{veri} that the function from Example~\ref{ex1} has the properties
stated.
\begin{ack}
We thank the referee for a number of very valuable comments and suggestions.
\end{ack}

\section{Preliminaries}\label{hyp-geom}
The Koebe one quarter theorem is 
usually stated for functions univalent in the unit disk.
The following version for functions univalent in an arbitrary disk 
 is deduced by a simple transformation from this.
\begin{lemma}\label{koebe}
Let $f\colon D(a,r)\to\C$ be univalent.  Then
\begin{equation} \label{koebe3}
f(D(a,r))\supset D\!\left(f(a),\frac14 r|f'(a)|\right).
\end{equation}
\end{lemma}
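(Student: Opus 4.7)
The plan is to reduce the statement directly to the classical Koebe one quarter theorem on the unit disk, which asserts that if $g\colon \D\to\C$ is univalent with $g(0)=0$ and $g'(0)=1$, then $g(\D)\supset D(0,1/4)$.

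First I would introduce the auxiliary function
\begin{equation}
g(\zeta):=\frac{f(a+r\zeta)-f(a)}{r\,f'(a)}
\end{equation}
for $\zeta\in\D$. A direct check shows that $g$ is well defined (note $f'(a)\neq 0$ since $f$ is univalent on $D(a,r)$), univalent on $\D$, and satisfies the normalization $g(0)=0$ and $g'(0)=1$. This is the whole content of the ``simple transformation'' mentioned in the statement: an affine change of variable in the source and an affine normalization in the target.

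Next I would apply the classical one quarter theorem to $g$ to conclude $g(\D)\supset D(0,1/4)$. Unwinding the definition of $g$, this inclusion reads: for every $w\in\C$ with $|w-f(a)|<\tfrac14 r|f'(a)|$, the point $(w-f(a))/(r f'(a))$ lies in $g(\D)$, hence there exists $\zeta\in\D$ with $g(\zeta)=(w-f(a))/(rf'(a))$, which rearranges to $f(a+r\zeta)=w$. Since $a+r\zeta\in D(a,r)$, this gives $w\in f(D(a,r))$, proving \eqref{koebe3}.

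There is no real obstacle here; the only thing to be careful about is the normalization factor, so that the change-of-variable produces a function with $g'(0)=1$ exactly (rather than some nonzero constant), which is precisely why we divide by $r f'(a)$ rather than by $f'(a)$ alone. Everything else is a direct citation of the classical theorem.
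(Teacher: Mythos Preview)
Your proof is correct and matches the paper's approach exactly: the paper does not spell out a proof but simply remarks that the result ``is deduced by a simple transformation'' from the unit-disk version, and your affine change of variable $g(\zeta)=(f(a+r\zeta)-f(a))/(r f'(a))$ is precisely that transformation.
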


We will use some standard results from hyperbolic geometry; see \cite{Beardon2007}
for an exposition of all results needed.
For a hyperbolic domain $\Omega$ let $\lambda_\Omega(z)$ be the 
\emph{density of the hyperbolic metric} in~$\Omega$. 
For the unit disk $\D$ we thus have 
\begin{equation} \label{c4a}
\lambda_\D(z)=\frac{2|z|}{1-|z|^2}.
\end{equation}
The \emph{hyperbolic length} $\ell_\Omega(\gamma)$ of a curve $\gamma$ is given by
\begin{equation} \label{c4b}
\ell_\Omega(\gamma) =\int_\gamma \lambda_\Omega(z)|dz| .
\end{equation}
The Euclidean length is denoted by $\ell(\gamma)$.

The Schwarz--Pick lemma says that if $U$ and $V$ are hyperbolic domains
and $f\colon U\to V$ is holomorphic, then
\begin{equation} \label{c5c}
\lambda_V(f(z))|f'(z)|\leq \lambda_U(z)
\end{equation}
for all $z\in U$, with equality if $f$ is a covering.
Conversely, if equality in~\eqref{c5c} holds for one $z\in U$,
then $f$ is a covering (and thus equality holds for all $z\in U$).

If $U\subset V$, we may apply~\eqref{c5c} with $f(z)=z$ and find that 
\begin{equation} \label{c5d}
\lambda_V(z)\leq \lambda_U(z),
\end{equation}
with strict inequality if $U$ is a proper subset of~$V$.

The Koebe one quarter theorem \eqref{koebe3} yields that
for a simply connected domain $\Omega$ we have 
\begin{equation} \label{c5b}
\frac{1}{2\dist(z,\partial\Omega)} \leq \lambda_\Omega(z)
\leq \frac{2}{\dist(z,\partial\Omega)} 
\end{equation}
for $z\in\Omega$.
Here $\dist(z,\partial\Omega)$ is the Euclidean distance of $z$ to $\partial\Omega$.

For a half-plane $H$ we have
\begin{equation} \label{c5e}
\lambda_H(z) = \frac{1}{\dist(z,\partial H)} .
\end{equation}

\section{The logarithmic change of variable}\label{log-change}
We describe the logarithmic change of variable in more detail than in the introduction,
and refer to \cite[\S 2]{Eremenko1992} or \cite[\S 5]{Sixsmith2018}
for additional information.  Let $f$ be as in Theorem~\ref{thm1}.
Without loss of generality we may assume that $\sing(f^{-1})\subset A^*(z_1,f)$.
We may also assume that $0\in A^*(z_0,f)\setminus\{z_0\}$ and $f(0)\in A^*(z_1,f)\setminus\{z_1\}$, 
as this can be achieved by conjugation with $z\mapsto z+c$ for a suitable $c\in\C$.

We will also assume that $z_1\notin \sing(f^{-1})$.
As we shall see in \S\ref{quasi}, the general case can be reduced to this case.
Then there exists a Jordan curve $\gamma$ in $A^*(z_1,f)$ such that 
\begin{equation} \label{l1}
\sing(f^{-1})\cup \{f(0)\}\subset \interior(\gamma)
\end{equation}
and
\begin{equation} \label{l2}
z_1\in\exterior(\gamma). 
\end{equation}
Here $\interior(\gamma)$ and $\exterior(\gamma)$ denote the interior and exterior 
of $\gamma$, respectively.
Since $\gamma$ is contained in $A^*(z_1,f)$ but $0\in A^*(z_0,f)$ and hence $0\notin A^*(z_1,f)$
 we have $0\in\exterior(\gamma)$.
We connect $\gamma$ to $0$ by a curve $\gamma_0$ which except for its starting
point on $\gamma$ is contained in $\exterior(\gamma)$ and which does not contain any 
of the points~$z_j$. Let 
\begin{equation} \label{l3}
W:=\exterior(\gamma)\setminus\gamma_0,\  
V:=f^{-1}(W),\  
U:=\exp^{-1}(W)\  
\text{and} \ 
T:=\exp^{-1}(V).
\end{equation}

There exists a map $\phi\colon V\to U$ such that the restriction of $\phi$ to a component 
of $V$ is a biholomorphic map from this component onto $U$. 
Thus $F\colon T\to U$, $F(w)=\phi(e^w)$, is a $2\pi i$-periodic map with the property
that the restriction of $F$ to a component of $T$ is a biholomorphic map from
 this component onto $U$. We also put $\Gamma_0:=\exp^{-1}(\gamma_0)$.
See Figure~\ref{logchange}.
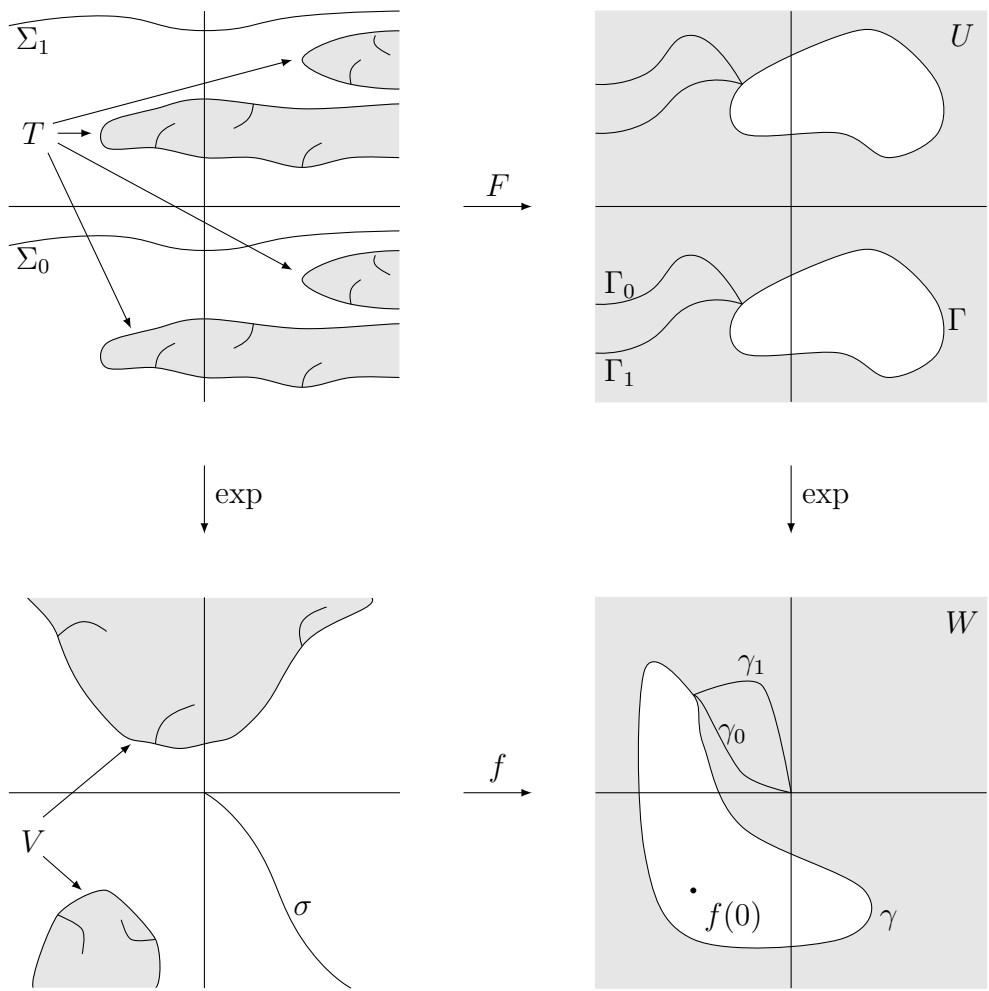
\begin{figure}[!htb]
\captionsetup{width=.85\textwidth}
\centering
\begin{tikzpicture}[scale=0.65,>=latex]
\filldraw [gray!20] (2,-10) rectangle (10,-2);
\filldraw [white] plot  [smooth cycle,tension=0.6] coordinates {(5,-6.7) (4.2,-5) (4,-4) (3,-3.5) (3,-7.2) (4,-9) (7,-9) (7.5,-8) };
\draw [black] plot  [smooth cycle,tension=0.6] coordinates {(5,-6.7) (4.2,-5) (4,-4) (3,-3.5) (3,-7.2) (4,-9) (7,-9) (7.5,-8) };
\draw [black] plot  [smooth, tension=0.5] coordinates {(6,-6) (5.4,-3.8) (4,-4) };
\draw [black] plot  [smooth, tension=0.5] coordinates {(6,-6) (5,-5.6) (4.2,-4.2) (4,-4) };
\filldraw [gray!20] (2,2) rectangle (10,10);
\filldraw [white] plot  [smooth cycle,tension=0.6] coordinates {(7,5) (8,5) (9,4) (9,3) (8,2.5) (7,3) (5,3) (5,4) };
\draw [black] plot  [smooth cycle,tension=0.6] coordinates {(7,5) (8,5) (9,4) (9,3) (8,2.5) (7,3) (5,3) (5,4) };
\filldraw [white] plot  [smooth cycle,tension=0.6] coordinates {(7,9.5) (8,9.5) (9,8.5) (9,7.5) (8,7) (7,7.5) (5,7.5) (5,8.5) };
\draw [black] plot  [smooth cycle,tension=0.6] coordinates {(7,9.5) (8,9.5) (9,8.5) (9,7.5) (8,7) (7,7.5) (5,7.5) (5,8.5) };
\draw [black] plot  [smooth, tension=0.9] coordinates {(2,7.5) (3,7.7) (4,8.5) (5,8.5) };
\draw [black] plot  [smooth, tension=0.9] coordinates {(2,3.0) (3,3.2) (4,4.0) (5,4.0) };
\draw [black] plot  [smooth, tension=0.9] coordinates {(2,4.0) (3,4.2) (4,5.0) (5,4.0) };
\draw [black] plot  [smooth, tension=0.9] coordinates {(2,8.5) (3,8.7) (4,9.5) (5,8.5) };
\filldraw [gray!20] plot  [smooth cycle,tension=0.9] coordinates {(-3,-1.8) (-4,-3) (-5,-4.5) (-6,-5) (-7,-5) (-8,-4.5) (-9,-2.8) (-9,-1.5) };
\draw [black] plot  [smooth cycle,tension=0.9] coordinates {(-3,-1.8) (-4,-3) (-5,-4.5) (-6,-5) (-7,-5) (-8,-4.5) (-9,-2.8) (-9,-1.5) };
\draw [black] plot  [smooth,tension=0.9] coordinates {(-4,-3) (-4,-2.5) (-3.5,-2.2)  };
\draw [black] plot  [smooth,tension=0.9] coordinates {(-7,-5) (-6.8,-4.5) (-6.2,-4.2)  };
\draw [black] plot  [smooth,tension=0.9] coordinates {(-9,-2.8) (-8.5,-2.5) (-8,-2.7)  };
\filldraw [white] (-10,-2) rectangle (-2,-1);
\filldraw [gray!20] plot  [smooth cycle,tension=0.4] coordinates {(-9.5,-10.0) (-9,-8.5) (-8,-8) (-7,-9) (-7,-10.0) (-8.2,-10.1) };
\draw [black] plot  [smooth cycle,tension=0.4] coordinates {(-9.5,-10.0) (-9,-8.5) (-8,-8) (-7,-9) (-7,-10.0) (-8.2,-10.1) };
\draw [black] plot  [smooth,tension=0.6] coordinates {(-9,-8.5) (-8.5,-8.8) (-8.5,-9.3)  };
\draw [black] plot  [smooth,tension=0.6] coordinates {(-7,-9) (-7.5,-9.0) (-7.7,-8.6)  };
\filldraw [white] (-10,-10.2) rectangle (-1,-10);
\draw [black] plot  [smooth,tension=0.9] coordinates {(-6,-6) (-5,-7) (-4,-9) (-3,-10)  };
\filldraw [gray!20] plot  [smooth cycle,tension=0.6] coordinates {(-8,7.7) (-8,7.2) (-7,7.2) (-6,7) (-5,7) (-4,6.8) (-3,7) (-2,7) (-1.8,7) (-1.8,8) (-2,8.1) (-4,8) (-6,8.2) (-7,8) };
\draw [black] plot  [smooth cycle,tension=0.6] coordinates {(-8,7.7) (-8,7.2) (-7,7.2) (-6,7) (-5,7) (-4,6.8) (-3,7) (-2,7) (-1.8,7) (-1.8,8) (-2,8.1) (-4,8) (-6,8.2) (-7,8) };
\draw [black] plot  [smooth,tension=0.9] coordinates {(-7,7.2) (-6.9,7.5) (-6.6,7.7)  };
\draw [black] plot  [smooth,tension=0.9] coordinates {(-4,6.8) (-3.9,7.1) (-3.6,7.3)  };
\draw [black] plot  [smooth,tension=0.9] coordinates {(-5,8.1) (-5.1,7.8) (-5.4,7.6)  };
\filldraw [white] (-2,6) rectangle (-1,8.5);
\filldraw [gray!20] plot  [smooth cycle,tension=0.6] coordinates {(-8,3.2) (-8,2.7) (-7,2.7) (-6,2.5) (-5,2.5) (-4,2.3) (-3,2.5) (-2,2.5) (-1.8,2.5) (-1.8,3.5) (-2,3.6) (-4,3.5) (-6,3.7) (-7,3.5) };
\draw [black] plot  [smooth cycle,tension=0.6] coordinates {(-8,3.2) (-8,2.7) (-7,2.7) (-6,2.5) (-5,2.5) (-4,2.3) (-3,2.5) (-2,2.5) (-1.8,2.5) (-1.8,3.5) (-2,3.6) (-4,3.5) (-6,3.7) (-7,3.5) };
\draw [black] plot  [smooth,tension=0.9] coordinates {(-7,2.7) (-6.9,3.0) (-6.6,3.2)  };
\draw [black] plot  [smooth,tension=0.9] coordinates {(-4,2.3) (-3.9,2.6) (-3.6,2.8)  };
\draw [black] plot  [smooth,tension=0.9] coordinates {(-5,3.6) (-5.1,3.3) (-5.4,3.1)  };
\filldraw [gray!20] plot  [smooth cycle,tension=0.6] coordinates {(-4,9) (-3,9.5) (-1.5,9.5) (-1.5,8.5) (-3,8.5) };
\draw [black] plot  [smooth cycle,tension=0.6] coordinates {(-4,9) (-3,9.5) (-1.5,9.5) (-1.5,8.5) (-3,8.5) };
\draw [black] plot  [smooth,tension=0.9] coordinates {(-3,8.5) (-3,8.8) (-2.8,9) };
\draw [black] plot  [smooth,tension=0.9] coordinates {(-2.5,9.5) (-2.5,9.3) (-2.2,9.1) };
\filldraw [gray!20] plot  [smooth cycle,tension=0.6] coordinates {(-4,4.5) (-3,5) (-1.5,5) (-1.5,4) (-3,4) };
\draw [black] plot  [smooth cycle,tension=0.6] coordinates {(-4,4.5) (-3,5) (-1.5,5) (-1.5,4) (-3,4) };
\draw [black] plot  [smooth,tension=0.9] coordinates {(-3,4) (-3,4.3) (-2.8,4.5) };
\draw [black] plot  [smooth,tension=0.9] coordinates {(-2.5,5) (-2.5,4.8) (-2.2,4.6) };
\draw [black] plot  [smooth,tension=0.9] coordinates {(-10,5.2) (-8,5.4) (-6,5.1) (-4,5.4) (-2,5.5)  };
\draw [black] plot  [smooth,tension=0.9] coordinates {(-10,9.7) (-8,9.9) (-6,9.6) (-4,9.9) (-2,10)  };
\filldraw [white] (-2,1.5) rectangle (-1,10);
\draw[->](-0.7,6)->(0.7,6);
\node at (0,6)[above]{$F$};
\draw[->](-0.7,-6)->(0.7,-6);
\node at (0,-6)[above]{$f$};
\draw[-](-10,6)->(-2,6);
\draw[-](-10,-6)->(-2,-6);
\draw[-](2,6)->(10,6);
\draw[-](2,-6)->(10,-6);
\draw[-](6,2)->(6,10);
\draw[-](-6,2)->(-6,10);
\draw[-](-6,-2)->(-6,-10);
\draw[-](6,-2)->(6,-10);
\draw[-](-10,6)->(-2,6);
\draw[->](-6,0.7)->(-6,-0.7);
\node at (-6,0)[right]{$\exp$};
\draw[->](6,0.7)->(6,-0.7);
\node at (6,0)[right]{$\exp$};
\node at (8.0,-8.6) {$\gamma$};
\node at (4.8,-4.8) {$\gamma_0$};
\node at (5.2,-3.4) {$\gamma_1$};
\node at (9.4,3.7) {$\Gamma$};
\node at (2.5,4.4) {$\Gamma_0$};
\node at (2.5,2.6) {$\Gamma_1$};
\node at (9.5,9.5) {$U$};
\node at (9.5,-2.5) {$W$};
\node at (-9.5,7.5) {$T$};
\draw[->](-9.1,7.7)->(-4.2,9);
\draw[->](-9,7.5)->(-8.3,7.5);
\draw[->](-9,7.3)->(-4.2,4.7);
\draw[->](-9.2,7.1)->(-7.5,3.5);
\node at (-9.5,4.9) {$\Sigma_0$};
\node at (-9.5,9.4) {$\Sigma_1$};
\node at (-9.5,-7) {$V$};
\draw[->](-9.3,-6.5)->(-7.5,-5);
\draw[->](-9.3,-7.3)->(-8.5,-8);
\filldraw [black] (4,-8) circle (0.05);
\node at (4,-8)[below right]{$f(0)$};
\node at (-4.4,-8)[below right]{$\sigma$};
\end{tikzpicture}
\caption{The logarithmic change of variable.}
\label{logchange}
\end{figure}

As mentioned, we call $F$ the logarithmic transform of $f$.
By construction, there exists $s>0$ such that $\H_{>s}\subset U$.
(In the introduction we sketched the case $\H_{>s}=U$.)
For $w\in T$ satisfying $\re F(w)>s$, let $G$ be the branch of the inverse of $F$ mapping
$F(w)$ to $w$.
Then $G$ is univalent in the disk $D(F(w),\re F(w)-s)$.
The image of this disk does not contain a disk of radius greater than $\pi$ 
around~$w$. 
Koebe's one quarter theorem \eqref{koebe3} now yields,
with $u=F(w)$, that 
\begin{equation} \label{l3a}
|G'(u)|\leq \frac{4\pi}{\re u-s}
\quad\text{for}\ \re u>s.
\end{equation}
In terms of $F$ we obtain~\eqref{e2}; that is,
\begin{equation} \label{l3b}
|F'(w)|\geq \frac{1}{4\pi}(\re F(w)-s)
\quad\text{if}\ w\in T\ \text{and}\ \re F(w)>s.
\end{equation}

The components of $V$ are called \emph{tracts} of $f$ and those of $T$ are called \emph{logarithmic tracts}.
Since the order of $f$ is finite,
the Denjoy--Carleman--Ahlfors theorem (see, e.g., \cite[Chapter~5, Section~1]{Goldberg2008}) 
implies that $V$ has only finitely many components; that is, $f$ has only finitely many tracts.
In fact, the number of tracts is at most $\max\{2\rho(f),1\}$.
We will not need this bound, however.

There exists a curve $\sigma$ in $\C\setminus V$ which connects $0$ with $\infty$.
The curve $\sigma$ may be wiggly. We note, however, that 
Ahlfors' spiral theorem~\cite[Section~8.5.1]{Hayman1989} gives an upper bound on how much the
components of $V$ and hence the
curve $\sigma$ may wind around the origin. We will not require the sharp bound provided
by this theorem. 
The results of \S\ref{geom-tract} below 
yield a (non-sharp) bound for this winding.
This bound suffices for our purposes.

Let $\Sigma$ be a preimage of $\sigma$ under the exponential function.
Then $\Sigma$ is a curve tending to $\infty$ on both ends,
with $\re w\to\infty$ as $w\to\infty$ through one end of $\Sigma$ 
while $\re w\to-\infty$ as $w\to\infty$ through the other end of $\Sigma$.
For $k\in\Z$ we put $\Sigma_k:=\Sigma+2\pi i k$.
The curves $\Sigma_k$ and $\Sigma_{k+1}$ then bound a ``strip-like'' domain~$S(k)$.

For $j\in\{0,\dots,p-1\}$ there exists $v_j\in S(0)\cap T$ such that 
$\exp v_j=z_j$. With $v_p:=v_0$ we then have 
\begin{equation} \label{l4}
\exp F(v_j)=f(\exp v_j)=f(z_j)=z_{j+1}=\exp v_{j+1}
\end{equation}
and thus 
\begin{equation} \label{l5}
F(v_j)=v_{j+1} + 2\pi m_{j+1}
\end{equation}
for some $m_{j+1}\in\Z$.
We put 
\begin{equation} \label{l6}
w_{j+1}:=v_{j+1} + 2\pi m_{j+1}
\end{equation}
for $j\in\{0,\dots,p-1\}$ and $w_0:=w_p$.
Then $w_j\in S(m_j)$ and $F(w_j)=w_{j+1}$.
Thus $\{w_0,\dots,w_{p-1}\}$ is a periodic cycle for $F$.
It follows from the equation $\exp\circ F^p=f^p\circ \exp$ that 
$(F^p)'(w_0)=(f^p)'(z_0)$ so that this periodic cycle is in fact 
attracting for~$F$.

Before defining the attracting basins of the $w_j$ we note that 
the endpoint $0$ of $\gamma_0$ is in $A^*(z_0,f)$ while the other endpoint
of $\gamma_0$ is on $\gamma$ and thus in $A^*(z_1,f)$.
Thus $\gamma_0$ intersects $J(f)$,
and so does $f^{-1}(\gamma_0)$.

For a reasonable definition of the Julia set $J(F)$ of $F$ we would expect that
\begin{equation} \label{l3c}
J(F)=\exp^{-1} J(f).
\end{equation}
We will in fact take~\eqref{l3c} as the definition of $J(F)$.
We find that $J(F)$ intersects both $\Gamma_0=\exp^{-1}(\gamma_0)$ 
and $\exp^{-1}(f^{-1}(\gamma_0))$.
Moreover, since $\partial A^*(z_0,f)$ and $\partial A^*(z_1,f)$ intersect $\gamma_0$, we
find that the boundaries of the (suitably defined) attracting basins of $w_0$ and $w_1$ will 
intersect $\Gamma_0$.

We thus want to extend the definition of $F$ to $\exp^{-1}(f^{-1}(\gamma_0))$.
In other words, we want to remove the ``spikes'' $\exp^{-1}(f^{-1}(\gamma_0))$ from~$T$.
Hence we put, similarly to~\eqref{l3}, 
\begin{equation} \label{l7}
W^*:=\exterior(\gamma),\  
V^*:=f^{-1}(W^*),\  
U^*:=\exp^{-1}(W^*)\  
\text{and} \ 
T^*:=\exp^{-1}(V^*).
\end{equation}

The function $F$ does not have a holomorphic (and not even a continuous) extension to~$T^*$.
In fact, the ``endpoints'' of the spikes are given by the set $X:=\exp^{-1}(f^{-1}(0))$.
And for $\xi\in X$ we have $\re F(w)=\log |f(e^w)|\to -\infty$ as $w\to\xi$.
Moreover, a continuous extension does not exist to points on the spikes which are not
endpoints.
In fact, let $\tau$ be a component of $\exp^{-1}(f^{-1}(\gamma_0\setminus \{0\}))$ 
and $\xi\in\tau$.
Then $F(w)$ tends to some point $\eta\in \exp^{-1}(\gamma_0)$ as $w\to\xi$ from one side
of $\tau$, while $F(w)$ tends to $\eta+2\pi i$ or $\eta-2\pi i$ as  $w\to\xi$ from 
the other side of $\tau$.

We will, however, consider a discontinuous extension of $F$ by defining 
$F(\xi)$ as one of these limiting values $\eta$ or $\eta\pm 2\pi i$.
To be definite, we choose the one with larger imaginary part.

We have thus extended $F\colon T\to U$ to a map $F\colon T^*\setminus X\to U^*$. 
The map $F$ is discontinuous on the spikes that form $T^*\setminus T$.
And it can be continued analytically across each such spike from both sides. 
In order to be able to work with $F$ and its iterates as a holomorphic function
also on the spikes (and on points mapped to the spikes), we introduce a
modification $F_1$ of $F$ obtained by perturbing the curve~$\gamma$.

To be precise, we consider a second 
curve $\gamma_1$ which connects the same point on $\gamma$ to $0$ as $\gamma_0$ does,
which is disjoint from $\gamma_0$ except for the endpoints, and which is homotopic
(with fixed endpoints) to $\gamma_0$ 
in $\C\setminus \{z_0,\dots,z_{p-1}\}$. (As mentioned, we can think of $\gamma_1$ as a small
perturbation of $\gamma_0$.)
The curves $\gamma_0$ and $\gamma_1$ then form a Jordan curve which bounds 
a domain $G$.
See again Figure~\ref{logchange}, where $\Gamma_1:=\exp^{-1}(\gamma_1)$.

We define $W_1$, $V_1$, $U_1$ and $T_1$ as in~\eqref{l3}, with $\gamma_0$ replaced
by $\gamma_1$,
and obtain a map $F_1\colon T_1\to U_1$.
We can extend $F_1$  to a map $F_1\colon T^*\setminus X\to U^*$ as above.
We can choose $F_1$ in such a way that if $w\notin \exp^{-1}(f^{-1}(\overline{G}))$,
then $F_1(w)=F(w)$.
Note that the condition $w\notin \exp^{-1}(f^{-1}(\overline{G}))$ is equivalent to
$F(w)\notin \exp^{-1}(\overline{G})$. 
On the other hand, if $w\in \exp^{-1}(f^{-1}(\overline{G}))$,
then $F_1(w)$ and $F(w)$ differ by $2\pi i$, yielding in particular that
$F_1(w)\in T^*\setminus X$ if and only if $F(w)\in T^*\setminus X$. 
In any case we find that $F(F_1(w))=F(F(w))$.

Suppose now that $w,F(w),F^2(w),\dots,F^{n-1}(w)\in T^*\setminus X$ so that the iterate
$F^n(w)\in U^*$ is defined.
The above reasoning shows that if $g_k\in\{F,F_1\}$ for $k\in\{1,\dots,n\}$,
then $F^n(w)=(F\circ g_{n-1}\circ \dots \circ g_1)(w)$ for all $w\in T^*\setminus X$.
If $F^{n-1}(w)\notin \exp^{-1}(f^{-1}(\overline{G}))$ or, equivalently,
$F^n(w)\notin \exp^{-1}(\overline{G})$, then we also have
$F^n(w)=(g_n\circ g_{n-1}\circ \dots \circ g_1)(w)$.
By a suitable choice of the $g_k$ we can now achieve that 
$g_n\circ g_{n-1}\circ \dots \circ g_1$ and thus
$F^n$ maps a neighborhood of $w$ biholomorphically onto a neighborhood of $F^n(w)$.

We put 
\begin{equation} \label{l12}
I'(F) := \{w\in T\colon  \re F^n(w) \to \infty \text{ as } n \to \infty \}
\end{equation}
and, for $M>0$,
\begin{equation} \label{l13}
I'(F,M):=\left\{w \in T\colon \liminf_{n\to \infty}\re F^n(w)\geq M\right\}.
\end{equation}
Analogously to~\eqref{b0c} we have 
\begin{equation} \label{l14}
I'(F)=\bigcap_{M>0}I'(F,M).
\end{equation}
We note that 
\begin{equation} \label{l15}
I(f)= \exp I'(F) 
\quad\text{and}\quad
I(f,M)= \exp I'(F,\log M). 
\end{equation}

\section{Attracting basins and itineraries}\label{att-basin}
By construction, $\gamma$ and its interior are in $A(z_1,f)$. Thus
\begin{equation} \label{a2}
\C\setminus V^*\subset A(z_0,f).
\end{equation}
On the other hand, $z_0\in V^*$ since $z_1$ is in the exterior of~$\gamma$.
Thus $w_0\in T^*=\exp^{-1}(V^*)$. For $w$ close to $w_0$ all iterates $F^n(w)$ are 
defined and $\lim_{k\to\infty} F^{kp}(w)=w_0$.
In view of~\eqref{a2} it seems reasonable to define the attracting basin $A(w_0,F)$ of $w_0$ with
respect to $F$ not only as the set of points with the last property, but
in such a way that it contains  $\C\setminus T^*= \exp^{-1}(\C\setminus V^*)$.
We thus put 
\begin{equation} \label{a3}
\begin{aligned} 
A(w_0,F) := \!\big\{ w\in\C\colon &
 F^{kp}(w)\in \C\setminus T^* \ \text{for some}\ k\in\N_0
\\ &
\text{or}\  
\lim_{k\to\infty} F^{kp}(w)=w_0
\big\}
\end{aligned} 
\end{equation}
and
\begin{equation} \label{a4}
A(w_j,F) := \left\{ w\in\C\colon 
F^{p-j}(w)\in A(w_0,F) \right\}
\end{equation}
for $1\leq j\leq p-1$.
\begin{lemma}\label{la-AF}
$A(w_j,F) =\exp^{-1}(A(z_j,f))$ for $j\in\{0,\dots,p-1\}$.
\end{lemma}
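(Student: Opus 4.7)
The plan is to first prove the equality for $j=0$ and then deduce the case $1\leq j\leq p-1$ from it together with the definition~\eqref{a4}. The argument for $j=0$ rests on two ingredients: the semi-conjugacy $\exp\circ F=f\circ\exp$, valid on $T^*\setminus X$, and the inclusion $\C\setminus V^*\subset A(z_0,f)$ from~\eqref{a2}.

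For the inclusion $A(w_0,F)\subseteq \exp^{-1}(A(z_0,f))$ I would take $w\in A(w_0,F)$ and treat the two possibilities in~\eqref{a3} separately. If $F^{kp}(w)\in \C\setminus T^*$ for some $k$, iterated application of the semi-conjugacy yields $f^{kp}(\exp w)=\exp F^{kp}(w)\in \C\setminus V^*\subset A(z_0,f)$, so $\exp w\in A(z_0,f)$. If instead $F^{kp}(w)\to w_0$, continuity of the exponential together with $\exp w_0=z_0$ gives $f^{kp}(\exp w)\to z_0$.

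The reverse inclusion $\exp^{-1}(A(z_0,f))\subseteq A(w_0,F)$ requires more care. Given $w$ with $\exp w\in A(z_0,f)$, either the $F$-orbit eventually enters $\C\setminus T^*$ (in which case $w\in A(w_0,F)$ by the first alternative in~\eqref{a3}), or $F^{kp}(w)$ stays defined for all $k$ and $\exp F^{kp}(w)=f^{kp}(\exp w)\to z_0$. In the latter case $F^{kp}(w)$ accumulates on $\exp^{-1}(z_0)=\{w_0+2\pi im\colon m\in\Z\}$, and the point is to show that it converges to $w_0$ itself rather than drifting among different lifts. The decisive observation is that since $F$ is $2\pi i$-periodic one has $F^p(w_0+2\pi im)=w_0$ for every $m\in\Z$, and because $|(F^p)'(w_0)|=|(f^p)'(z_0)|<1$ there is a small neighborhood $\tilde N$ of $w_0$ with $F^p(\tilde N+2\pi im)\subset \tilde N$ for all $m$, on which $F^{jp}$ converges uniformly to $w_0$. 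Therefore as soon as $F^{kp}(w)$ lands close enough to some lift of $z_0$, the subsequent iterate falls in $\tilde N$, and $F^{jp}(w)\to w_0$ from there.

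For $1\leq j\leq p-1$ the definition~\eqref{a4} gives $w\in A(w_j,F)$ iff $F^{p-j}(w)\in A(w_0,F)$, which by the $j=0$ case equals $\exp F^{p-j}(w)\in A(z_0,f)$, i.e.\ $f^{p-j}(\exp w)\in A(z_0,f)$, and this is equivalent to $\exp w\in A(z_j,f)$ by the cyclic structure $A(z_j,f)=\{z\colon f^{p-j}(z)\in A(z_0,f)\}$ of the basins. The step I expect to be delicate is the $2\pi i$-periodicity argument in the reverse inclusion for $j=0$: a priori the lifted orbit $F^{kp}(w)$ could shadow different translates $w_0+2\pi im$, but the periodicity of $F$ collapses all of them to the same image $w_0$ under $F^p$, which forces actual convergence to $w_0$ itself.
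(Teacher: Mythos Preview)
Your argument is correct and follows the same two-case split for $j=0$ (orbit enters $\C\setminus T^*$ versus orbit stays in $T^*$) and the same reduction via~\eqref{a4} for $j\geq 1$ that the paper uses. In fact your treatment of the reverse inclusion is more complete: the paper simply asserts ``since $f^{kp}(z)\to z_0$ we find that $F^{kp}(w)\to w_0$'', whereas you supply the reason---the $2\pi i$-periodicity of $F$ forces $F^p(w_0+2\pi im)=w_0$ for every $m$, so once the lifted orbit comes near any lift of $z_0$ the next $p$-th iterate lands in a fixed attracting neighborhood of $w_0$ itself.
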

\begin{proof}
First we consider the case $j=0$.
Let $w\in A(w_0,F)$ and put $z:=e^w$.
If there exists $k\in\N_0$ such that $F^{kp}(w)\in \C\setminus T^*$, then
\begin{equation} \label{a6}
f^{kp}(z)=\exp F^{kp}(w) \in \exp(\C\setminus T^*) \subset \C\setminus V^*\subset A(z_0,f) 
\end{equation}
and hence $z\in A(z_0,f)$.
If there is no $k\in\N_0$ such that $F^{kp}(w)\in \C\setminus T^*$
and thus $F^{kp}(z)\in T^*$ for all $k\in\N_0$, then
$F^{kp}(w)\to w_0$ as $k\to\infty$ and thus 
\begin{equation} \label{a7}
f^{kp}(z)=\exp F^{kp}(w) \to \exp w_0 =z_0
\end{equation}
as $k\to\infty$. Thus we have $z\in A(z_0,f)$ in both cases.
Hence 
$A(w_0,F) \subset \exp^{-1}(A(z_0,f))$. 

To prove the opposite inclusion, suppose that $z=e^w\in A(z_0,f)$.
If there exists $k\in\N_0$ such that $\exp F^{kp}(w)=f^{kp}(z)\in \C\setminus V^*$, then
$F^{kp}(w)\in \exp^{-1}(\C\setminus V^*)=\C\setminus T^*$ and thus $w\in A(w_0,F)$.
If there does exist $k\in\N_0$ such that $\exp F^{kp}(w)=f^{kp}(z)\in \C\setminus V^*$, then
$F^{kp}(w)\in T^*$ for all $k\in\N_0$.
Since $f^{kp}(z)\to z_0$ we find that $F^{kp}(w)\to w_0$.
Thus $w\in A(w_0,F)$ in both cases.
Hence 
$\exp^{-1}(A(z_0,f)) \subset A(w_0,F)$.
This completes the proof for the case that $j=0$. 

The case that $1\leq j\leq p-1$ follows from this. In fact, for $z=e^w$ we find that if
$F^{p-j}(w)\in A(w_0,F)$,
then  $f^{p-j}(z)=\exp F^{p-j}(w)\in \exp A(w_0,F)\subset A(z_0,f)$
so that  $z\in A(z_j,f)$. Thus
$A(w_0,F) \subset \exp^{-1}(A(z_0,f))$. 
The opposite inclusion can be proved analogously.
\end{proof}

A standard result of complex dynamics (see \cite[Corollary~4.12]{Milnor2006}) says that 
\begin{equation} \label{a9}
J(f)=\partial A(z_j,f)
\end{equation}
for all $j\in\{0,1,\dots,p-1\}$.
Together with~\eqref{l3c} and Lemma~\ref{la-AF} this yields that 
\begin{equation} \label{a10}
J(F)=\partial A(z_j,F)
\end{equation}
for all $j\in\{0,1,\dots,p-1\}$.

Let now $w\in J(F)$. The \emph{itinerary} of $w$ is the sequence $(i_k)$ defined 
by $F^k(w)\in S(i_k)$.
We also write $i_k(w)$ instead of $i_k$.

Since $\sigma\in \C\setminus V^*$ we have $\Sigma_k\subset \C\setminus T^*\subset A(w_0,F)$ for all $k\in\Z$.
This implies that if $j\in\{1,\dots,p-1\}$ and if $A$ is a component of $A(w_j,F)$, 
then there exists $m\in\Z$ such that
$A\subset S(m)$ and in fact $\overline{A}\subset S(m)$.

Let $A_j:=A^*(w_j,F)$ be the component of $A(w_j,F)$ that contains $w_j$.
Since $w_j\in S(m_j)$
we have $\partial A_j\subset \overline{A_j}\subset S(m_j)$ for $1\leq j\leq p-1$.
We deduce that if $w\in\partial A_0$ and $1\leq j\leq p-1$,
then the itinerary $(i_k(w))$ satisfies $i_k(w)=m_j$ if $k\equiv j \pmod{p}$.
Thus if $w\in\partial A_1$,
then $i_k(w)=m_j$ if $k+1\equiv j \pmod{p}$ and $j\in\{1,\dots,p-1\}$.
In particular, 
\begin{equation} \label{a5}
F^{kp}(w) \in\partial A_1 \subset S(m_1)
\quad\text{for}\ w\in\partial A_1 \ \text{and}\ k\in\N.
\end{equation}

\section{Geometry of the tracts}\label{geom-tract}
We use arguments from papers by Bara\'nski~\cite{Baranski2007} and
Rottenfusser,  R\"uckert,  Rempe and Schleicher~\cite{Rottenfusser2011}
to show that the hypothesis
that $f$ has finite order implies that the tracts are not too wild.
\begin{lemma}\label{la-geom}
Let $f\in\EL$ be of finite order.
Then there exists $C_0>0$ such that every logarithmic tract $L$ of $f$
contains a curve $\tau$ tending to $\infty$ such that if $r>0$, then the length
of the part of $\tau$ which is contained in $\H_{\leq r}$ is at most~$C_0r$.
\end{lemma}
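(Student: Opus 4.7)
The plan is to take $\tau$ to be the image under $G:=(F|_L)^{-1}\colon \H_{>s}\to L$ of a half-line along the positive real axis, truncated far enough out to control its behavior for small~$r$. Fix $\mu>\rho(f)$ and $R_0>0$ with $|f(z)|\leq \exp(|z|^\mu)$ for $|z|\geq R_0$. For $w=G(u)$ and $u\geq s+1$, the identity $u=\re F(w)=\log|f(e^w)|$ combined with the finite-order bound gives $u\leq e^{\mu\re w}$ once $|e^w|\geq R_0$, or equivalently $\re G(u)\geq (1/\mu)\log u$ for $u$ large. In particular $\re G(u)\to\infty$ as $u\to\infty$ along the real axis. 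Choose $u_0\geq s+1$ so large that $\re G(u)\geq 1$ for every $u\geq u_0$, and set $\tau(u):=G(u)$ for $u\in[u_0,\infty)$. A standard boundary-correspondence argument shows that $\tau$ tends to $\infty$: any subsequence $G(u_n)$, $u_n\to\infty$, that stayed bounded would have a subsequential limit $w^\ast\in\overline L$, contradicting either continuity of $F$ at $w^\ast\in L$ or the fact that the image under $F$ of any sequence approaching $\partial L$ tends to $\partial\H_{>s}$.

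By construction $\tau\subset\H_{\geq 1}$, so $\tau\cap\H_{\leq r}=\emptyset$ for $r<1$ and the length bound is trivial. For $r\geq\log R_0$ the finite-order estimate gives the inclusion $\{u\geq u_0:\re G(u)\leq r\}\subseteq[u_0,e^{\mu r}]$, and combined with the Koebe-derived inequality \eqref{l3a}, namely $|G'(u)|\leq 4\pi/(\re u-s)$, we obtain
\[
\ell(\tau\cap\H_{\leq r})\leq \int_{u_0}^{e^{\mu r}}\frac{4\pi}{u-s}\,du\leq 4\pi\mu r.
\]
For the intermediate range $1\leq r<\log R_0$ (if non-empty), the length is crudely bounded by $\ell(\tau\cap\H_{\leq \log R_0})\leq 4\pi\mu\log R_0$, which is at most a constant times $r$ because $r\geq 1$. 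Taking $C_0$ to be the maximum of the resulting constants yields the desired bound uniformly over all tracts, since $f$ has only finitely many of them by the Denjoy--Carleman--Ahlfors theorem.

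The main obstacle I anticipate is verifying that $\re G(u)\to\infty$ along the positive real axis; this is precisely where the finite-order hypothesis enters, and it is what lets us truncate $\tau$ to a fixed right half-plane so that the small-$r$ part of the bound is automatic. Once that step is in hand, the argument becomes a direct combination of the Koebe estimate \eqref{l3a} with the exponential dependence of $u$ on $\re w$, yielding a linear length bound in $r$ with constant comparable to $4\pi\mu$.
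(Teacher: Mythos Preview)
Your proof is correct and follows essentially the same approach as the paper: both take $\tau$ to be the $G$-image of a real half-line, use the finite-order bound to confine the relevant parameter range to $[u_0,e^{\mu r}]$, and then estimate the Euclidean length of $G$ restricted to that interval. The only difference is cosmetic: the paper packages the length estimate via the hyperbolic metric (using $\dist(z,\partial L)\leq\pi$ together with the explicit density on $\H_{>M}$, obtaining $C_0=2\pi\mu$), whereas you integrate the Koebe bound \eqref{l3a} directly and arrive at a constant comparable to $4\pi\mu$.
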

\begin{proof}
By hypothesis, \eqref{e4}  holds for
 $\mu>\rho(f)$ if $|z|$ is sufficiently large.
In terms of the logarithmic transform $F$ this means that 
\begin{equation} \label{g2}
\re F(z) \leq \exp (\mu\re z)
\end{equation}
if $\re z$ is sufficiently large, say $\re z>M$.
We may assume that $M$ is chosen such that $M>0$ and 
$\H_{>M}\subset U$.
Let $G\colon U\to L$ be the inverse of $F\colon L\to U$.
We define the curve $\tau$ by $\tau\colon [0,\infty)\to L$, $\tau(t)=G(M+1+t)$.
If $\re \tau(t)\leq r$, then
\begin{equation} \label{g4}
M+1+t=F(\tau(t))=\re F(\tau(t))\leq \exp(\mu\re\tau(t))\leq \exp(\mu r)
\end{equation}
and hence 
\begin{equation} \label{g5}
t\leq t(r):=\exp(\mu r)-1.
\end{equation}
Thus the part of $\tau$ which is contained in  $\H_{\leq r}$
is contained in the subcurve $\tau|_{[0,t(r)]}$.

Using \eqref{c5d} and \eqref{c5e} we find that
\begin{equation} \label{g6}
\begin{aligned} 
\ell_L(\tau|_{[0,t(r)]})
&=\ell_U([M+1,M+1+t(r)])
\\ &
\leq 
\ell_{\H_{>M}}([M+1,M+1+t(r)])
\\ &
=\int_{M+1}^{M+1+t(r)} \frac{dt}{\dist(t,\partial \H_{>M})}
=\int_{1}^{1+t(r)} \frac{dt}{t}
\\ &
=\log(1+t(r))
=\mu r.
\end{aligned} 
\end{equation}
On the other hand,
by \eqref{c5b} and since $\dist(z,\partial L)\leq\pi$ for all $z\in L$, we have
\begin{equation} \label{g7}
\begin{aligned} 
\ell_L(\tau|_{[0,t(r)]})
&=\int_{\tau|_{[0,t(r)]})} \lambda_L(z)|dz|
\\ &
\geq \frac{1}{2\pi} \int_{\tau|_{[0,t(r)]})} |dz|
=\frac{1}{2\pi} \ell(\tau|_{[0,t(r)]})) .
\end{aligned} 
\end{equation}
Combining \eqref{g6} and \eqref{g7} yields the conclusion with $C_0=2\pi\mu$.
\end{proof}

\section{Quasiconformal modification}\label{quasi}
As before we assume that the hypothesis  of Theorem~\ref{thm1} hold for $j=1$; that is,
$\sing(f^{-1})\subset A^*(z_1,f)$.
We mentioned at the beginning of \S\ref{log-change} 
that we may assume without loss of generality that $z_1\notin\sing(f^{-1})$.
In this section we will explain why this can be assumed.
So suppose we have $z_1\in\sing(f^{-1})$.
The idea is to modify the function $f$ quasiconformally so that the condition 
is satisfied for the modified function. There are several ways to do this 
quasiconformal modification. We shall use the following result
of Rempe and Stallard~\cite[Corollary~2.2]{Rempe2010}, which is proved 
using the results of Rempe's paper~\cite{Rempe2009}. 
Here two entire functions $f$ and $g$ are called \emph{affinely equivalent}
if there exist affine functions $\varphi,\psi\colon \C\to\C$ such that
$\psi\circ f=g\circ\varphi$.
Moreover,
\begin{equation} \label{rs2}
J_R(f):=\{z\in J(f)\colon |f^n(z)|\geq R\ \text{for all}\ n\geq 1\}.
\end{equation}

\begin{lemma}\label{la-qu1}
Suppose that $f,g\in\EL$ are affinely equivalent and let $K>1$. 
Then there exist $R>0$ and a $K$-quasiconformal map
$\theta\colon\C\to\C$ such that
\begin{equation} \label{rs1}
\theta(f(z)) = g(\theta(z))
\end{equation}
for all $z \in J_R(f)$.
\end{lemma}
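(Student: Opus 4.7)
This statement is cited as a corollary of Rempe's quasiconformal equivalence theorem~\cite{Rempe2009}, so the plan is to describe how the proof proceeds via the logarithmic change of variable.

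First I would record the consequence of affine equivalence at the level of singular sets. Writing $\psi(z) = \alpha z + \beta$ and $\varphi(z) = \gamma z + \delta$, the relation $\psi \circ f = g \circ \varphi$ gives $\sing(g^{-1}) = \psi(\sing(f^{-1}))$, so $g \in \EL$ and its singular set is bounded. Choose $R_1$ so that $\sing(f^{-1}) \cup \sing(g^{-1}) \subset D(0,R_1)$, and let $F \colon T_F \to \H_{>\log R_1}$ and $G \colon T_G \to \H_{>\log R_1}$ be the corresponding logarithmic transforms. Taking logarithms of the affine relation yields a natural bijection between the components of $T_F$ and those of $T_G$ in such a way that the corresponding biholomorphic parametrizations onto the common half-plane differ, near infinity, by an additive constant essentially equal to $\log(\alpha/\gamma)$.

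Next I would construct a quasiconformal map $\Theta \colon \C \to \C$ that is defined tract-by-tract by the composition of these biholomorphic parametrizations (and is therefore conformal on each tract), agrees with the affine translation $w \mapsto w + \log(\alpha/\gamma)$ on a left half-plane $\H_{<s_0}$, and interpolates in a strip $\{s_0 \leq \re w \leq s_1\}$. The Koebe estimate~\eqref{e2} applied to both $F$ and $G$ shows that the parametrizations of corresponding tracts agree with the prescribed affine translation up to bounded distortion, with the error decaying as one moves to the right in the half-plane. Pushing $s_0$ sufficiently far to the right makes the dilatation of the interpolation smaller than any prescribed $K>1$. Descending $\Theta$ via the exponential map yields a $K$-quasiconformal $\theta \colon \C \to \C$. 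For $z \in J_R(f)$ with $R := e^{s_1}$, every iterate $f^n(z)$ lies outside $D(0,R)$ and hence inside a tract of $f$, so its logarithmic lift is a point at which $\Theta$ is defined by the tract-by-tract formula; the desired conjugacy $\theta(f(z)) = g(\theta(z))$ then follows directly from the construction.

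The main obstacle is the quantitative dilatation control in the interpolation step. One must perform a Whitney-type patching across the boundaries of the tracts together with a sharp comparison of the two sets of biholomorphic parametrizations in the strip $\{s_0 \leq \re w \leq s_1\}$, so that the Beltrami coefficient of $\Theta$ is supported in a region where it can be made as small as required. This comparison ultimately rests on the Koebe estimate~\eqref{e2} and a matching upper estimate obtained from the Koebe distortion theorem applied inside tracts far from their boundaries; taken together, these give quantitative convergence of the two parametrizations to their common affine model along horizontal rays.
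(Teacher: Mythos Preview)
The paper does not prove this lemma: it is quoted as \cite[Corollary~2.2]{Rempe2010}, itself a consequence of Rempe's rigidity theorem in~\cite{Rempe2009}. So there is no in-paper argument to compare against; the relevant benchmark is Rempe's proof.

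Your sketch misses the essential step of that proof. A map $\Theta$ built in a single patching step --- equal to $(G|_{L_G})^{-1}\circ F|_{L_F}$ on each logarithmic tract of $F$ and affine on a left half-plane --- satisfies $G\circ\Theta=F$ on the tracts, not the conjugacy $\Theta\circ F=G\circ\Theta$. Concretely, if both $w$ and $F(w)$ lie in tracts of $F$, your construction gives $G(\Theta(w))=F(w)$ while $\Theta(F(w))=(G^{-1}\circ F)(F(w))$; these agree only if $F$ and $G$ coincide on the image of $F$, which is false in general. The conjugacy has to hold along the entire forward orbit, and no single tract-by-tract gluing can deliver that.

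Rempe's actual argument is a limiting construction. One starts from an initial quasiconformal map $\Theta_0$ (essentially the lift of the affine equivalence --- this is where your interpolation and dilatation control are relevant) and then defines $\Theta_{n+1}$ on each tract of $F$ by the pullback relation $G\circ\Theta_{n+1}=\Theta_n\circ F$, extending by $\Theta_0$ off the tracts. The expansion estimate~\eqref{e2} forces $(\Theta_n)$ to be uniformly Cauchy, and the limit $\Theta$ satisfies $G\circ\Theta=\Theta\circ F$ on the set of points whose entire $F$-orbit remains in the tracts. No additional dilatation is created, since each pullback step composes $\Theta_n$ with conformal maps on the tracts; the dilatation of the limit equals that of $\Theta_0$. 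Your ingredients (logarithmic transform, Koebe estimates, qc interpolation) are the right ones, but they feed into the construction of $\Theta_0$, not of the conjugacy itself.
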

We apply this result for $g_c(z):=f(z+c)$ with $c\in\C$.
Then $\sing(g_c^{-1})=\sing(f^{-1})$.
For sufficiently small $c$ the function $g_c$ will have an attracting
periodic point $z_{1,c}$ of period $p$ close to $z_1$
such that $A^*(z_{1,c},g_c)$ contains $\sing(g_c^{-1})$.
Since $z\mapsto z_{1,c}$ is not constant, we have $z_{1,c}\notin \sing(g_c^{-1})$ for small~$c$.

Rempe and Stallard~\cite[Theorem~1.3]{Rempe2010} used their Lemma~\ref{la-qu1}
to prove that if $f,g\in\EL$ are affinely equivalent, then $\dim I(f)=\dim I(g)$.
The same argument yields that 
\begin{equation} \label{rs3}
\lim_{M\to\infty} 
\dim (\partial A^*(z_{1,c},g_c)\cap I(g_c,M)) = 
\lim_{M\to\infty} 
\dim (\partial A^*(z_1,f)\cap I(f,M)) .
\end{equation}
We may thus suppose that $z_1\notin\sing(f^{-1})$ in the proof of Theorem~\ref{thm1}
since otherwise we can pass from $f$ to $g_c$ for some small~$c$.

\section{Proof of Theorem \ref{thm1}: Upper bound}\label{hausdimI}
In this section we will prove that, under the hypotheses of Theorem~\ref{thm1}, we have
\begin{equation} \label{b4<}
\lim_{M\to\infty} 
\dim (\partial A^*(z_j,f)\cap I(f,M)) \leq 1 .
\end{equation}
We begin with the following lemma.
\begin{lemma}\label{la-cov}
For each $\alpha >1$ there exists $M>0$ with the following property:
If $a\in\H_{>M}$,
if $0<r\leq 8$ and if $D(a,r)\cap A_1\neq \emptyset$, 
then there exist disks $D(a_k,r_k)$, $k\in\Z$, such that 
\begin{equation} \label{u1}
F^{-p}(D(a,r))\cap\partial A_1 \subset \bigcup_{k\in\Z} D(a_k,r_k)
\end{equation}
and
\begin{equation} \label{u2}
\sum_{k\in\Z} r_k^\alpha \leq \frac{r^\alpha}{M^\alpha} .
\end{equation}
\end{lemma}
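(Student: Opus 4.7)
The plan is to parameterise the inverse branches of $F^p$ landing on $\partial A_1$ by the strip index $k\in\Z$ of the intermediate iterate $F^{p-1}(w)$, bound each branch's image of $D(a,r)$ via Koebe distortion, and sum using the expansion hypothesis~\eqref{b2}. For $w\in F^{-p}(D(a,r))\cap\partial A_1$, set $\xi_j:=F^{p-j}(w)$, so that $\xi_0\in D(a,r)\cap\partial A_1$ and $\xi_p=w$. The itinerary analysis of \S\ref{att-basin} forces $\xi_j\in S(m_{(p-j+1)\bmod p})$ for every $j\in\{0,2,3,\dots,p\}$ (a fixed strip independent of $w$), whereas $\xi_1=F^{p-1}(w)\in\partial A_0$ may lie in any strip $S(k)$, $k\in\Z$, parametrising the admissible preimage branches. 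A further bounded ambiguity coming from the finitely many logarithmic tracts of $f$ per strip contributes only a universal multiplicative factor, which I absorb into constants.

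Choose $M$ so large that $M-8>\max\{s,t\}$. Then both~\eqref{b2} and~\eqref{e2} apply at $\xi_0$ for every $\xi_0\in D(a,r)$. Using the piecewise framework with $F$ and $F_1$ described at the end of \S\ref{log-change}, each admissible itinerary yields a biholomorphic inverse branch $G_k$ of $F^p$ on a neighborhood of $\xi_0$. Because each $F\colon L\to U$ is a biholomorphism from a logarithmic tract $L$ onto a domain containing $\H_{>s}$, the branch $G_k$ extends biholomorphically to the disk $D(\xi_0,\re\xi_0-s)$, which contains $D(a,r)$ with plenty of room. Koebe distortion then yields $G_k(D(a,r))\subset D(a_k,r_k)$ with $r_k\leq Kr/|(F^p)'(w_k)|$ for an absolute constant $K$, where $w_k:=G_k(a)$.

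We estimate $|(F^p)'(w_k)|=\prod_{j=1}^{p}|F'(\xi_j^{(k)})|$ factor by factor using~\eqref{b2}. The factor at $j=1$ is at least $\beta|\xi_0|\geq\beta(M-8)$, providing the $M$-decay. The factor at $j=2$ is at least $\beta|\xi_1^{(k)}|\geq 2\pi\beta(|k|-1)$ because $\xi_1^{(k)}\in S(k)$, providing the essential $|k|$-decay. For $j=3,\dots,p$, the iterate $\xi_{j-1}^{(k)}$ lies in the fixed strip $S(m_{(p-j+2)\bmod p})$, so $|F'(\xi_j^{(k)})|$ is bounded below by a positive constant $C_0$ independent of $k$, using either~\eqref{b2} when $\re\xi_{j-1}^{(k)}\geq t$ or the nonvanishing of $F'$ on the relevant tract otherwise. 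Combining, $r_k\leq C_1 r/(M|k|)$ for $|k|\geq 2$ and $r_k\leq C_1 r/M$ for the finitely many small $|k|$. Since $\alpha>1$, summing yields $\sum_{k\in\Z}r_k^\alpha\leq C_2(\alpha)\,r^\alpha/M^\alpha$; enlarging the threshold $M$ (depending on $\alpha$) at the outset absorbs the constant $C_2(\alpha)$ and gives the claimed bound.

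The principal obstacle is the second step: verifying that each $G_k$ extends biholomorphically to a disk of radius of order $M$ about $\xi_0$ in the piecewise setting of $F$ and $F_1$, which requires a careful discussion of the spikes together with the tract geometry from Lemma~\ref{la-geom}. A secondary difficulty is the uniform positive lower bound on $|F'(\xi_j^{(k)})|$ for $j\geq 3$: as $k$ varies, the intermediate iterates $\xi_j^{(k)}$ move within their respective fixed strips, and one must verify that they remain in a compact subset of a tract on which $F'$ is uniformly nonvanishing.
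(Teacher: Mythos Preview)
Your approach is essentially the paper's: parameterise by the strip index $k$ of $\xi_1=F^{p-1}(w)$, extract the $M$-decay and the $k$-decay from the first two inverse steps via~\eqref{b2}, and bound the remaining $p-2$ steps by a constant. The paper carries this out step by step (bounding $|G'|$ directly on each successive image disk) rather than via Koebe distortion on the full composition $G_k$, and for the very first step it uses the standard estimate~\eqref{l3a} instead of~\eqref{b2}, but these are cosmetic differences.

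Your two flagged obstacles are each resolved more simply than you suggest. For the biholomorphic extension, no discussion of spikes or of Lemma~\ref{la-geom} is needed: since $\H_{>s}\subset U$, every branch of $F^{-1}$ is already holomorphic on $\H_{>M}$, and by the finite-order bound~\eqref{g2} each such branch maps $\H_{>M}$ into $\H_{>\mu^{-1}\log M}$; inductively all branches of $F^{-p}$ are holomorphic on $\H_{>M}$ once $M$ is large enough. For the uniform lower bound on $|F'(\xi_j^{(k)})|$ with $j\geq 3$, the compactness idea you propose does not work as stated (the $\xi_j^{(k)}$ need not lie in a fixed compact set as $k$ varies), but the same iterated use of~\eqref{g2} shows that $\re\xi_{j-1}^{(k)}\to\infty$ as $M\to\infty$, uniformly in $k$; then~\eqref{e2} gives $|F'(\xi_j^{(k)})|\geq 1$ for $M$ large, which is exactly how the paper obtains $|G_{**}'|\leq 1$.
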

\begin{proof}
Choose $R>0$ so large that $\H_{>R}\subset U$.
Then the branches of $F^{-1}$ are holomorphic in $\H_{>R}$.
For large $M>R$ such a branch of $F^{-1}$ maps $\H_{>M}$ into $\H_{>R}$.
Thus the branches of $F^{-2}=(F^{-1})^2$ are holomorphic in $\H_{>M}$.
Inductively we see that for sufficiently large $M$
the branches of $F^{-p}$ are holomorphic in $\H_{>M}$.

Since there are only finitely many, say $m$, logarithmic tracts in each domain $S(l)$
the set of preimages of $a$ under $F$ may be written in the form 
$\{b_j+2\pi i k\colon  k\in\Z, 1\leq j\leq m\}$.
We may assume here that $|\im b_j|\leq\pi$ for $1\leq j\leq m$.
Let $G_j$ be the branch of $F^{-1}$ that maps $a$ to $b_j$.
It follows from~\eqref{l3a} that if $u\in D(a,r)$, then 
\begin{equation} \label{u3}
|G_j'(u)|\leq \frac{4\pi}{\re u -R}\leq \frac{4\pi}{M-8-R}\leq \frac{13}{M}, 
\end{equation}
provided $M$ is sufficiently large.
Hence
\begin{equation} \label{u4}
G_j(D(a,r))\subset D\!\left(b_j,\frac{13}{M}r\right).
\end{equation}
This implies that 
\begin{equation} \label{u5}
F^{-1}(D(a,r))\subset \bigcup_{j=1}^m \bigcup_{k\in\Z} D\!\left(b_j+2\pi i k,\frac{13}{M}r\right).
\end{equation}
Choosing $M$ large we have $D(b_j+2\pi i k,13r/M)\subset \H_{>R}$ for all $j$ and $k$.
In fact, 
\begin{equation} \label{u5a}
\re b_j\geq \frac{\log M}{\mu}
\end{equation}
 by~\eqref{g2}.
Let $G_*$ be the branch of $F^{-1}$ which maps $\H_{>R}$ into the logarithmic tract 
containing $A_{p-1}$. 
It follows from~\eqref{b2} that if $u\in D(b_j+2\pi i k,13r/M)$, then
\begin{equation} \label{u6}
|G_*'(u)|=\frac{1}{|F'(G_*(u))|}\leq \frac{1}{\beta|F(G_*(u))|}
= \frac{1}{\beta|u|}.
\end{equation}
Noting that  $|z|\geq (\re z+|\im z|)/2$ and using~\eqref{u5a} as well as $|\im b_j|\leq \pi$
we see that 
\begin{equation} \label{u7}
\begin{aligned}
|u|
&\geq |b_j+2\pi i k|-\frac{13}{M}r
\\ &
\geq \frac12\!\left( \frac{\log M}{\mu}+2\pi|k|-\pi\right)-\frac{13}{M}r 
\geq \frac13\!\left( \frac{\log M}{\mu}+\pi|k|\right)
\end{aligned}
\end{equation}
for $u\in D(b_j+2\pi i k,13r/M)$, provided $M$ is sufficiently large.
Combining the last two estimates we find that 
\begin{equation} \label{u8}
|G_*'(u)|\leq
\frac{3\mu}{\beta(\log M +\mu\pi|k|)}
\end{equation}
for $u\in D(b_j+2\pi i k,13r/M)$.
Putting $c_{jk}:=G_*(b_j+2\pi i k)$ and 
\begin{equation} \label{u9}
s_k:=\frac{39\mu}{\beta(\log M +\mu\pi|k|)M}r
\end{equation}
we conclude that 
\begin{equation} \label{u10}
F^{-2}(D(a,r))\subset \bigcup_{j=1}^m \bigcup_{k\in\Z} D\!\left(c_{jk}, s_k\right).
\end{equation}
Finally, let $G_{**}$ be the branch of $(F^{p-2})^{-1}$ that maps $A_{p-1}$ 
to $A_1$. (If $p=2$, then $G_{**}$ is the identity.)
By~\eqref{b2}, or the weaker standard estimate~\eqref{e2}, we have
$|F'(w)|\geq 1$ if $\re F(w)$ is large enough.
Thus $|(F^{-1})'(u)|\leq 1$ if $\re u$ is large enough.
We conclude that $|G_{**}'(u)|\leq 1$ if $\re u$ is large enough.
In particular, choosing $M$ sufficiently large
we can achieve that $|G_{**}'(u)|\leq 1$ for all $u$ in one of the 
disks $D\!\left(c_{jk}, s_k\right)$.
With $d_{jk}:=G_{**}(c_{jk})$ we thus have $G_{**}(D(c_{jk}, s_k))\subset D(d_{jk},s_k)$.
We deduce that 
\begin{equation} \label{u11}
F^{-p}(D(a,r))\cap\partial A_1 \subset \bigcup_{j=1}^m \bigcup_{k\in\Z}  D(d_{jk},s_k) .
\end{equation}
It follows easily from~\eqref{u9} that
\begin{equation} \label{u12}
m \sum_{k\in\Z}  s_k^\alpha \leq \frac{r^\alpha}{M^\alpha}
\end{equation}
for large~$M$.
The conclusion follows by rearranging the $d_{jk}$ into one sequence $(a_k)_{k\in\Z}$,
obtaining the $r_k$ from the $s_k$ accordingly.
\end{proof}
\begin{remark}\label{remark1}
We note that it was in~\eqref{u6} where the hypothesis~\eqref{b2} was used 
and where the standard estimate~\eqref{e2} for functions in $\EL$ is not sufficient.

As already mentioned, we do not know whether the hypothesis~\eqref{b2}
is necessary.
Another condition involving a lower bound for the derivative is the 
\emph{rapid growth} condition introduced by Mayer and Urba\'nski~\cite{Mayer2008}. 
It appears to us that our condition is of a different nature than theirs.
\end{remark}
Given a covering of $\H_{>M}\cap \partial A_1$ with disks,
Lemma~\ref{la-cov} yields a new covering of $\H_{>M}\cap\partial A_1$.
In order to use this to estimate the Hausdorff dimension, we need 
a covering where we can start with.
This is given by the following lemma.
\begin{lemma}\label{la-cov2}
For each $\alpha >1$ there exist $M>0$ and
disks $D(a_k,r_k)$, $k\in\Z$, such that
\begin{equation} \label{u13}
F^{-p}(\H_{>M} \cap \partial A_1)\cap\partial A_1
\subset \bigcup_{k\in\Z} D(a_k,r_k)
\end{equation}
and
\begin{equation} \label{u14}
\sum_{k\in\Z} r_k^\alpha <\infty .
\end{equation}
\end{lemma}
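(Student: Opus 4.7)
The plan is to reduce Lemma~\ref{la-cov2} to Lemma~\ref{la-cov} by first constructing an initial covering of $\H_{>M}\cap\partial A_1$ by disks of some fixed bounded radius, and then applying Lemma~\ref{la-cov} to each such disk separately, choosing the lemma's parameter individually for each disk so as to exploit the fact that the contraction of $F^{-p}$ becomes stronger as $\re w$ grows.

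The first step is a geometric claim: for $M$ sufficiently large, $\partial A_1\cap\H_{>M}$ can be covered by a family $\{D(b_{n,j},r_0)\}$, indexed by $n\in\Z_{\geq M}$ and $1\leq j\leq K_0$, for some fixed radius $r_0\in(0,8]$ and a constant $K_0$, with $\re b_{n,j}\in[n,n+1)$. Since $\partial A_1\subset S(m_1)$ and $S(m_1)$ is bounded by the curves $\Sigma_{m_1}$ and $\Sigma_{m_1+1}=\Sigma_{m_1}+2\pi i$, the vertical cross-section of $S(m_1)$ at each real part has imaginary extent comparable to $2\pi$; the non-sharp control on the winding of $\sigma$ and $\Sigma$ alluded to in \S\ref{log-change} and made available by Lemma~\ref{la-geom} ensures that this width stays uniformly bounded as $\re w\to\infty$. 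Hence each horizontal slab $\{n\leq\re w<n+1\}\cap\partial A_1$ lies in a rectangle of bounded Euclidean dimensions, and a fixed number $K_0$ of disks of radius $r_0$ suffices for each $n$. Moreover each $D(b_{n,j},r_0)$ meets $A_1$ since $b_{n,j}$ is within $r_0$ of $\partial A_1$ and $A_1$ is open.

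Given the initial cover, I would apply Lemma~\ref{la-cov} to each $D(b_{n,j},r_0)$, but with the parameter $M$ of that lemma set to $M_n:=n-r_0$ rather than a single global value. Inspection of the proof of Lemma~\ref{la-cov} shows that, for each $\alpha>1$, its conclusion holds with any sufficiently large choice of that parameter, so by taking our $M$ large enough we may assume $M_n$ exceeds the threshold supplied by Lemma~\ref{la-cov} for every $n$ in our covering. Since $\re b_{n,j}\geq n>n-r_0=M_n$, the disk centers lie in $\H_{>M_n}$. For each pair $(n,j)$ this produces a cover of $F^{-p}(D(b_{n,j},r_0))\cap\partial A_1$ by disks whose $\alpha$-content is at most $r_0^\alpha/M_n^\alpha=r_0^\alpha/(n-r_0)^\alpha$. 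Taking the union over all $(n,j)$ gives the desired cover of $F^{-p}(\H_{>M}\cap\partial A_1)\cap\partial A_1$, and its total $\alpha$-content is bounded by
\[
K_0\, r_0^\alpha \sum_{n\geq M}\frac{1}{(n-r_0)^\alpha}<\infty,
\]
which is~\eqref{u14}.

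The main technical point, in my view, is the geometric claim that $K_0$ can be chosen independently of $n$: one must exclude the possibility that the tract containing $A_1$, and therefore $\partial A_1$, wanders arbitrarily far in the imaginary direction as $\re w\to\infty$, which would force an unbounded number of fixed-radius disks per horizontal slab. This is where finiteness of the order of $f$, through Lemma~\ref{la-geom}, enters in an essential way. Everything else is a direct invocation of Lemma~\ref{la-cov} together with the convergence of $\sum n^{-\alpha}$ for $\alpha>1$.
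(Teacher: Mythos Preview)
Your overall strategy---an initial cover of $\partial A_1\cap\H_{>M}$ followed by Lemma~\ref{la-cov} with a parameter adapted to each disk's real part---is exactly the paper's. The gap is in your geometric claim that a \emph{fixed} number $K_0$ of bounded-radius disks suffices for each unit slab $\{n\le\re w<n+1\}$. Your justification rests on two assertions, both problematic. First, the vertical cross-section of $S(m_1)$ at real part $r$ has \emph{measure} $2\pi$ (by $2\pi i$-periodicity of the $\Sigma_k$), but its \emph{diameter} need not be bounded, and even when it is an interval of length $2\pi$, its vertical position can drift with~$r$. Second, Lemma~\ref{la-geom} controls the length of $\tau$ in $\H_{\le r}$ only \emph{linearly}: that length is at most $C_0r$. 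This permits $\tau$---and hence $\partial A_1$, which lies within vertical distance $2\pi$ of $\tau$---to have length of order $n$ inside a single unit slab at real part~$n$. So the honest count is $K_n=O(n)$, not $O(1)$, and your sum becomes $\sum_n n\cdot n^{-\alpha}$, which converges only for $\alpha>2$, not for all $\alpha>1$ as required.

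The paper repairs this by using dyadic slabs $P_n=\{2^n\le\re w\le 2^{n+1}\}$ instead of unit slabs. Lemma~\ref{la-geom} gives directly that the pieces of $\tau$ meeting $P_n$ have total length $O(2^n)$, and (since each such piece must traverse a horizontal distance $\ge 2^{n-1}$) there are only boundedly many of them; hence $\tau\cap P_n$ and therefore $\partial A_1\cap P_n$ can be covered by $K_n=O(2^n)$ disks of radius $2\pi+1$. Applying Lemma~\ref{la-cov} with parameter $2^n$ to each such disk yields a contribution $K_n\cdot(2^n)^{-\alpha}=O(2^{n(1-\alpha)})$ per slab, which is summable for every $\alpha>1$. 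The dyadic width is precisely what makes the linear length bound from Lemma~\ref{la-geom} match the contraction factor in Lemma~\ref{la-cov}; unit slabs cannot achieve this balance.
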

\begin{proof} 
Choose $M$ according to Lemma~\ref{la-cov}. Let $n\in\N$ with $2^{n-1}\geq M$.
Let $L$ be the logarithmic tract containing $A_1$ and let the curve $\tau$ and
the constant $C_0$ be as in Lemma~\ref{la-geom}.
This lemma yields that if $n$ is large enough, then the length of the intersection
of $\tau$ with $\{z\colon 2^{n-1}\leq\re z\leq 2^{n+2}\}$ 
is at most $C_02^{n+2}$. 
This intersection may consist of several pieces.
Each piece which intersects 
$P_n:=\{z\colon 2^{n}\leq\re z\leq 2^{n+1}\}$ 
must have length at least $2^{n-1}$. Thus there are at 
most $8C_0$ such pieces. 

Since such a piece has length at most $C_02^{n+2}$, we can cover its 
intersection with $P_n$ by at most $C_0 2^{n+2}+1$ disks of radius~$1$.
Altogether we can cover the intersection of $\tau$ with $P_n$ by 
$K_n$ disks of radius~$1$, where
\begin{equation} \label{u15}
K_n \leq 8C_0(C_02^{n+2}+1)\leq 33C_0^2 2^n ,
\end{equation}
if $M$ and hence $n$ are large.
We may assume that the centers of these disks are contained in $P_n$.
Since for each $w\in\partial A_1$ there exists $w'\in\tau$ with $\re w=\re w'$ and
$|w-w'|=|\im(w-w')|\leq 2\pi$ we deduce that we can cover $P_n\cap \partial A_1$ by 
$K_n$ disks of radius $2\pi+1$.

We apply Lemma~\ref{la-cov} to each disk $D(a,2\pi+1)$ used in this 
covering of $P_n\cap \partial A_1$. As there are $K_n$ such disks,
we obtain a covering of 
$F^{-p}(P_n\cap \partial A_1)\cap\partial A_1$  with disks $D(a_k,r_k)$
such that 
\begin{equation} \label{u16}
\sum_{k\in\Z} r_k^\alpha \leq K_n \frac{(2\pi+1)^\alpha}{(2^n)^\alpha}
\leq \frac{33C_0^2(2\pi+1)^\alpha}{2^{(\alpha-1)n}}.
\end{equation}
The conclusion follows by taking the sum over all $n$ with $2^{n-1}\geq M$.
\end{proof} 
\begin{proof}[Proof of the upper bound in Theorem~\ref{thm1}] 
In order to prove~\eqref{b4<} it suffices in
view of~\eqref{l15} and Lemma~\ref{la-AF} to show that 
\begin{equation} \label{u17}
\lim_{M\to\infty} \dim (\partial A_1\cap I'(F,M)) \leq 1 .
\end{equation}
Let 
\begin{equation} \label{u18}
I^*(F,M):=\left\{w \in T\colon \re F^k(w)\geq M \ \text{for all}\ k\in\N \right\}.
\end{equation}
With 
\begin{equation} \label{u18a}
I_n:=\left\{w \in T\colon \re F^k(w)\geq M \ \text{for}\ 1\leq k\leq n \right\}.
\end{equation}
we thus have 
\begin{equation} \label{u18b}
I^*(F,M)=\bigcap_{n\in\N} I_n .
\end{equation}

Since 
\begin{equation} \label{u19}
I'(F,M)\subset \bigcup_{n\in\N} F^{-n}(I^*(F,M-1))
\end{equation}
it suffices to prove that 
\begin{equation} \label{u20}
\lim_{M\to\infty} \dim (\partial A_1\cap I^*(F,M)) \leq 1 .
\end{equation}
In order to do so, let $\alpha>1$. Lemma~\ref{la-cov2} says that if $M$ is sufficiently 
large, then $I_1\cap \partial A_1$ can be covered by disks $D(a_k,r_k)$ such that the radii satisfy
\begin{equation} \label{u21}
S:=\sum_{k\in\Z} r_k^\alpha <\infty .
\end{equation}
Applying Lemma~\ref{la-cov} to each disk for which $\re a_k\geq M$ yields a covering
of $I_2\cap \partial A_1$ with disks $D(b_k,s_k)$ such that
\begin{equation} \label{u22}
\sum_{k\in\Z} s_k^\alpha <\frac{S}{M^\alpha}.
\end{equation}
Inductively, given $n\in\N$, we obtain a covering of $I_n\cap \partial A_1$  with disks $D(c_k,t_k)$
such that
\begin{equation} \label{u23}
\sum_{k\in\Z} t_k^\alpha <\frac{S}{M^{n\alpha}}.
\end{equation}
Since we may assume that $M>1$ this implies in particular that the radii of the disks tend to
$0$ as $n$ tends to~$\infty$.
It follows that $\dim I^*(F,M)\leq \alpha$. Thus~\eqref{u20} follows. 
\end{proof} 
\begin{remark} 
Many of the arguments used apply more generally to functions in the Eremenko--Lyubich class~$\EL$.
We have used the hypothesis that $f\in\Speiser$ only to ensure that 
$z_1\notin \sing(f^{-1})$ can be achieved by a small perturbation, and  that
if $\sing(f^{-1})\cup\{f(0)\}\subset A^*(z_1,f)\setminus\{z_1\}$,
then there exists a Jordan curve $\gamma$ in $A^*(z_1,f)$ which contains $\sing(f^{-1})$ and $f(0)$
in its interior,
but $z_1$ in its exterior.
\end{remark} 

\section{Proof of Theorem \ref{thm1}: Lower bound}\label{hausdimI2}
In order to prove that
\begin{equation} \label{b4>}
\lim_{M\to\infty} 
\dim (\partial A^*(z_j,f)\cap I(f,M)) \geq 1 
\end{equation}
it suffices to prove that $\partial A^*(z_1,f)\cap I(f)$ contains a continuum.
This follows if we show that $\partial A_1\cap I'(F)$ contains a continuum.

For $j\in\{1,\dots,p-1\}$, let $L_j$ be the component of $T^*$ that contains $A_j$. 
Let $L_0$ be any component of $T^*$.
For large $M$ we consider the set $X$ of all $w\in L_1\cap \H_{\geq M}$ such that 
$F^k(w)\in L_j\cap \H_{\geq M}$ if $k+1\equiv j \pmod{p}$.

\begin{lemma}\label{la-cont}
The set $X$ contains an unbounded closed connected subset of $I'(F)$.
\end{lemma}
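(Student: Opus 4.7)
The plan is to construct an ``invariant'' unbounded arc (a hair) for the periodic itinerary $(L_1, L_2,\ldots, L_{p-1}, L_0)$; the required unbounded continuum $C\subset X$ will be its tail in $\H_{\geq M}$. First I would introduce the inverse branches. For each $j$, let $G_j := (F|_{L_j})^{-1} \colon U \to L_j$, and set
\begin{equation*}
\Phi := G_1 \circ G_2 \circ \cdots \circ G_{p-1} \circ G_0,
\end{equation*}
so that $\Phi$ sends (a subset of) $U$ into $L_1$ with $F^p \circ \Phi = \mathrm{id}$. The unique fixed point of $\Phi$ is $w_1$. Combining \eqref{l3a} with the finite-order bound \eqref{g2} from the proof of Lemma~\ref{la-geom} yields $\re G_j(u) \geq \mu^{-1} \log \re u$ and $|G_j'(u)| \leq 4\pi/(\re u - s)$; iterating these estimates $p$ times gives $M^\ast = M^\ast(M,p,\mu)$ such that $\Phi(\H_{\geq M^\ast}) \subset L_1 \cap \H_{\geq M}$. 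For $M$ large we also have $\re w_1 < M$.

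Next I would form approximating arcs $\alpha_n := \Phi^n([M^\ast,\infty)) \subset L_1$. Each $\alpha_n$ is a simple analytic arc with one endpoint $\Phi^n(M^\ast) \to w_1$ (by the contraction) and extending to infinity with $\re \to \infty$ (because $\re G_j(u) \to \infty$ as $\re u \to \infty$, iterated $np$ times). By construction, $F^k(\alpha_n) \subset L_{(k+1) \bmod p}$ for $0 \leq k \leq np-1$ and $F^{np}(\alpha_n) = [M^\ast,\infty) \subset \H_{\geq M}$. A standard hair-construction argument, using the Koebe contraction of $\Phi$ on compact subsets of $U$ and the preservation of the asymptotic tail, shows that the arcs $\alpha_n$ converge in Hausdorff distance on the one-point compactification of $\C$ to an unbounded simple arc $\alpha_\infty \subset \overline{L_1}$ from $w_1$ to $\infty$ with $F^k(\alpha_\infty) \subset \overline{L_{(k+1) \bmod p}}$ for every $k \geq 0$. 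Setting $C := \alpha_\infty \cap \H_{\geq M}$ and using $\re w_1 < M$ then gives an unbounded closed connected subset of $L_1 \cap \H_{\geq M}$ contained in $X$.

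To see $C \subset I'(F)$, I would use \eqref{b2} to obtain the expansion
\begin{equation*}
|(F^p)'(w)| = \prod_{k=0}^{p-1} |F'(F^k(w))| \geq \beta^p \prod_{k=1}^{p} |F^k(w)| \geq \beta^p M^p
\end{equation*}
for every $w \in X$, which is much larger than $1$ for $M$ large. Since the only $F^p$-fixed point with the prescribed itinerary is $w_1 \notin C$, a normal-family argument applied to this uniformly expanding situation shows that the orbit $(F^{np}(w))_n$ of any $w \in C$ cannot have a bounded subsequence, so $\re F^{np}(w) \to \infty$. Combining this with \eqref{b2} applied at the intermediate iterates yields $\re F^k(w) \to \infty$ for all $k$, hence $C \subset I'(F)$.

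The main obstacle is proving that the arcs $\alpha_n$ converge to a non-degenerate limit $\alpha_\infty$ rather than collapsing to $\{w_1\}$. This is the classical Devaney-style hair construction, but in the present generality it requires balancing the Koebe-type contraction of $\Phi$ near $w_1$ against the geometric control of the tracts at infinity provided by Lemma~\ref{la-geom} and the finite-order hypothesis \eqref{e4}. In the special case $f = E_\lambda$ treated by Bara\'nski, Karpi\'nska and Zdunik~\cite{Baranski2010} the explicit exponential structure makes this step much more transparent; here one must use the univalent-function estimate on $\Phi$ together with the tract geometry to ensure that the ``end at infinity'' of each $\alpha_n$ is genuinely preserved in the limit.
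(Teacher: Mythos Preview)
The paper does not prove this lemma directly: it simply cites Theorem~2.5 of Benini and Rempe~\cite{Benini2020} (parts~(a) and~(c)), and remarks that the result also follows from~\cite{Baranski2007} and~\cite{Rottenfusser2011}, which in fact produce an escaping curve in~$X$. Your proposal, by contrast, attempts to carry out a Devaney-style hair construction from scratch. That is in principle the content of the cited references, but your outline contains a genuine error that breaks the argument.

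The problem is the claim that $\Phi^n(M^\ast)\to w_1$ ``by the contraction'', and hence that the limiting arc $\alpha_\infty$ runs from $w_1$ to~$\infty$. Since $w_1$ is an \emph{attracting} periodic point of $F$ with $|(F^p)'(w_1)|=|(f^p)'(z_1)|<1$, the branch $\Phi$ of $(F^p)^{-1}$ satisfies $|\Phi'(w_1)|>1$: the point $w_1$ is \emph{repelling} for~$\Phi$, not attracting. The Koebe contraction you invoke is valid only on $\H_{\geq M^\ast}$; once the iterates $\Phi^n(M^\ast)$ leave this half-plane (which they must, since you arrange $\Phi(\H_{\geq M^\ast})\subset\H_{\geq M}$ with $M<M^\ast$), there is no mechanism forcing them toward~$w_1$, and near $w_1$ the map $\Phi$ pushes points away. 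In the correct hair construction for a periodic address, the landing point (if any) is a \emph{repelling} periodic point of~$F$ with that itinerary, never the attracting one. Note also that the paper allows $L_0$ to be \emph{any} component of~$T^*$; if $L_0$ is not the tract containing~$w_0$, then $w_1$ is not a fixed point of $\Phi$ at all.

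You yourself flag the convergence of the $\alpha_n$ to a non-degenerate limit as ``the main obstacle'' and do not resolve it; combined with the incorrect identification of the endpoint, this leaves the existence of $\alpha_\infty$ unproved. The actual constructions in~\cite{Baranski2007,Rottenfusser2011} parametrise the pullbacks by a potential and show pointwise convergence of $\alpha_n(t)$ for each fixed~$t$, using the expansion of $F$ rather than a global contraction of $\Phi$ toward a fixed point; this is what you would need to implement.
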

This lemma can be deduced from a result given by Benini and Rempe~\cite[Theorem~2.5]{Benini2020}.
More precisely, it follows from part (c) of their theorem that $X\neq\emptyset$.
Once this is known, the 
existence of the unbounded closed connected subset of $I'(F)$ follows from part~(a).

Benini and Rempe note that the results are not entirely new and they give precise
references where 
these (or very similar) results can be found in the papers~\cite{Baranski2007a,Benini2015,Rempe2007,Rempe2008,Rippon2005}.
However, they also include a self-contained proof.

We mention that the results of Bara\'nski~\cite{Baranski2007} and
Rottenfusser,  R\"uckert,  Rempe and Schleicher~\cite{Rottenfusser2011} yield
that the set $X$ actually contains a curve which tends to infinity and
which is contained in $I'(F)$.

\begin{proof}[Proof of the lower bound in Theorem~\ref{thm1}] 
It suffices to show that the set $X$ defined above is contained in~$\partial A_1$.
So let $u\in X$. 
For $k\in\N$ let $G_k$ be the branch of the inverse of $F^{kp}$ that maps $F^{kp}(u)$
to~$u$. Since $F^{kp}(u)\in L_1$ and $A_1\subset L_1$, and since $L_1\subset S(m_1)$,
 there exist $w_k\in A_1$ and a vertical segment $\sigma_k$
of length less than $2\pi$ that connects $F^{kp}(u)$ and $w_k$.
It follows from~\eqref{l3a} that if $M$ is chosen large enough, then
$G_k$ is defined on $\sigma_k$ and 
\begin{equation} \label{u25}
\ell(G_k(\sigma_k))\leq \frac{1}{2^k}\ell(\sigma_k)\leq \frac{\pi}{2^{k-1}}.
\end{equation}
Put $v_k:=G_k(w_k)$. The choice of the branch $G_k$ yields that $v_k\in A_1$.
By~\eqref{u25} we have $|v_k-u|\to 0$ as $k\to\infty$. Assuming that $M>|w_1|$ we 
have $u\notin A_1$ by the definition of~$X$.
We conclude that $u\in\partial A_1$.
\end{proof}

\section{Functions satisfying the hypothesis of Theorem~\ref{thm1}}\label{hypo-thm1}
\begin{proposition} \label{prop1}
Let $f$ be of the form \eqref{b3} with polynomials $p$ and $q$.
Then there exist $t>0$ and $\beta>0$ such that \eqref{b2a} and hence~\eqref{b2} hold.
\end{proposition}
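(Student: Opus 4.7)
The plan is to establish the classical asymptotic $f(z)\sim \tilde f(z)$, where
\begin{equation}
\tilde f(z):=\frac{p(z)}{q'(z)}\,e^{q(z)},
\end{equation}
as $|z|\to\infty$ in the tracts of $f$, and then read off both $zf'(z)/f(z)$ and $\log f(z)$ from this asymptotic. Set $m:=\deg p$ and $n:=\deg q$, and let $\alpha$ denote the leading coefficient of $q$. We may assume $n\geq 1$, as a constant $q$ makes $f$ a polynomial, a case outside the applications of this paper. Iterated integration by parts, using the identity $p(t)e^{q(t)}=(p(t)/q'(t))\,(e^{q(t)})'$, produces after $k$ steps
\begin{equation}
f(z)=\tilde f(z)+B(z)+C+(-1)^k I_k(z),
\end{equation}
where $B(z)$ is a sum of boundary terms of the form $a_j(z)e^{q(z)}$ with $a_j$ rational of degree strictly less than $m-n+1=\deg(p/q')$, $C\in\C$, and $I_k(z)=\int_0^z r_k(t)\,e^{q(t)}\,dt$ with $r_k$ rational of degree $m-kn$. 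Choose $k$ so large that $m-kn<-1$.

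For $z$ in a tract direction (i.e.\ $\re(\alpha e^{in\arg z})>0$), a Laplace-type estimate along the ray $[0,z]$ gives $|I_k(z)|\leq C|z|^{m-(k+1)n+1}e^{\re q(z)}=o(|\tilde f(z)|)$; while for $z$ in an anti-tract direction, $|I_k(z)|$ is bounded by a convergent integral and hence by a constant. Therefore
\begin{equation}
f(z)=\tilde f(z)(1+o(1))+O(1)
\end{equation}
as $|z|\to\infty$, uniformly in direction. Since $f$ is bounded on compact sets and $|f(z)|$ is uniformly bounded in each anti-tract sector, the hypothesis $|f(z)|\geq e^t$ for large $t$ forces $|\tilde f(z)|\geq e^t/2$, and in particular $|z|\to\infty$ and $\re q(z)\to\infty$ as $t\to\infty$. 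In this regime one computes
\begin{equation}
\frac{zf'(z)}{f(z)}=\frac{zp(z)e^{q(z)}}{\tilde f(z)(1+o(1))}=zq'(z)(1+o(1))\sim n|\alpha||z|^n\sim n|q(z)|,
\end{equation}
while the relevant branch of $\log f$ on the logarithmic tract satisfies $\log f(z)=q(z)+O(\log|z|)+O(1)$, so $|\log f(z)|=|q(z)|(1+o(1))$. Hence $|zf'(z)/f(z)|/|\log f(z)|\to n$ as $t\to\infty$, and \eqref{b2a} holds with, for instance, $\beta=n/2$.

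The main obstacle is the uniform control of $I_k(z)$ across the transition zones between tracts and anti-tracts, i.e.\ near the Stokes lines $\re(\alpha e^{in\arg z})=0$. The key observation here is that inside a thin wedge around such a line, $|\re q(z)|$ grows at most like $|z|^{n-1}$, so $|\tilde f(z)|$ grows only subexponentially and the bound $f=\tilde f(1+o(1))+O(1)$ forces $|f(z)|$ to be at most polynomial in $|z|$ within such a wedge. Consequently, $|f(z)|\geq e^t$ cannot hold there once $t$ is sufficiently large, so one is automatically left with $z$ firmly inside a tract where the Laplace-type bound on $I_k$ applies cleanly.
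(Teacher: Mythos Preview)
Your strategy is the same as the paper's: establish the asymptotic $f(z)\sim \tilde f(z)=\dfrac{p(z)}{q'(z)}e^{q(z)}$ in the tracts and then read off $zf'(z)/f(z)\sim zq'(z)$ and $\log f(z)\sim q(z)$. The paper simply quotes this asymptotic from \cite[Lemma~4.1]{Hemke2005}, which gives $f(z)-a_k=(1+o(1))\tilde f(z)$ \emph{uniformly} in the closed sectors $|\arg z-\phi_k|\le \pi/n$; since these sectors cover the plane, no separate discussion of Stokes lines is needed.

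Your final paragraph, however, contains a real error. In a \emph{fixed} wedge of half-angle $\varepsilon$ about a Stokes line one has $|\re q(z)|\sim |\alpha|\,|z|^n\,|\cos(n\arg z+\arg\alpha)|$, which can be as large as $c\,\varepsilon\,|z|^n$; the bound $O(|z|^{n-1})$ holds only on the Stokes line itself (or in a wedge whose angular width shrinks like $1/|z|$), so for $n\ge 2$ the quantity $|\tilde f(z)|$ is certainly not subexponential there. Worse, the argument is circular: you invoke $f=\tilde f(1+o(1))+O(1)$ inside the wedge to conclude that $|f|$ cannot be large, but the wedge is precisely where your Laplace estimate along the radial segment $[0,z]$ degenerates (the implied constant blows up as the direction approaches the Stokes line), so the asymptotic has not yet been established there. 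The standard remedy is contour deformation: for $z$ near a Stokes line, integrate first along a ray in an adjacent anti-tract direction (producing the asymptotic value $a_k$ plus a bounded remainder) and then from that ray to $z$; this yields the uniform asymptotic across the Stokes line and is essentially what Hemke's lemma packages. Once that is in place, your computation and the paper's coincide.
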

\begin{proof} 
Let $d$ be the degree of $q$ and let $c$ be the leading coefficient of $q$ so that
$q(z)\sim cz^d$ as $z\to\infty$.
For $1\leq k\leq d$ we put
\begin{equation} \label{v1}
\phi_k:=\frac{(2k+1)\pi -\arg c}{d}.
\end{equation}
It is well-known and 
easy to see that there exists $a_1,\dots,a_d\in\C$ such that for each $\varepsilon>0$ we have 
\begin{equation} \label{v2}
f(z)\to a_k
\quad \text{as}\ z\to\infty, \ |\arg z -\phi_k|\leq \frac{\pi}{2d}-\varepsilon.
\end{equation}
Moreover, the values $a_1,\dots,a_d$ are the only asymptotic values of $f$.
The only critical points of $f$ are the zeros of $p$. Thus $f\in\Speiser$.

An asymptotic expansion more precise than~\eqref{v2} is \cite[Lemma~4.1]{Hemke2005}
\begin{equation} \label{v3}
f(z)- a_k=(1+o(1))
\frac{p(z)}{q'(z)}\exp q(z)
\quad \text{as}\ z\to\infty, \ |\arg z -\phi_k|\leq \frac{\pi}{d}.
\end{equation}
The limits in~\eqref{v2} and~\eqref{v3} are uniform in the sectors specified.
If $R$ is sufficiently large and if $|f(z)|\geq  R$, we thus have
\begin{equation} \label{v4}
|f(z)|\leq 2\left| \frac{p(z)}{q'(z)}\exp q(z) \right| .
\end{equation}
Note here that if $R$ is large and $|f(z)|\geq R$, then $|z|$ is also large.
First~\eqref{v4} holds for those $z$ satisfying $|f(z)|\geq R$ which are in the sectors specified in~\eqref{v3},
but as these sectors cover the whole plane, we actually have~\eqref{v4}
for all $z\in\C$ for which $|f(z)|\geq R$.
Since $f'=pe^q$ we conclude that
\begin{equation} \label{v5}
\left|\frac{zf'(z)}{f(z)} \right|\geq \frac12 |zq'(z)| \quad\text{if} \ |f(z)|\geq R.
\end{equation}

For a branch of $\log f$ defined in a tract we find
for $z$ in the intersection of the tract with the sector 
given in~\eqref{v3} that
\begin{equation} \label{v6}
\log f(z) =  \log(f(z)-a_k) +O(1) = q(z) +\log\frac{p(z)}{q'(z)} +O(1)
\end{equation}
and hence that 
\begin{equation} \label{v7}
|\log f(z) | \leq  2|q(z)| .
\end{equation}
Together with~\eqref{v5} we thus have 
\begin{equation} \label{v8}
\left|\frac{zf'(z)}{f(z) \log f(z)} \right|\geq \frac14 \left| \frac{zq'(z)}{q(z)}\right|.
\end{equation}
We deduce that~\eqref{b2a} and hence~\eqref{b2} hold if $\beta<d/4$ and if $t$ is large enough.
\end{proof}

\section{Verification of Example \ref{ex1}} \label{veri}
A meromorphic function $f\colon\C\to\C\cup\{\infty\}$ is said to have 
a \emph{logarithmic singularity} over a point $a\in\C\cup\{\infty\}$ 
if there exists a neighborhood $U$ of $a$
and a component $V$ of $f^{-1}(U)$ such that $f\colon V\to U\setminus\{a\}$ is 
a universal covering. If this is the case for some neighborhood $U$ of~$a$, then it holds
for any simply connected neighborhood $U$ of $a$ which does not contain any other singularity
of $f^{-1}$.
We say that $V$ is a \emph{tract over the logarithmic singularity}~$a$.
(The term ``logarithmic tract'' is also common, but we have used this with
a different meaning in \S\ref{log-change}.)
There may be more than one logarithmic singularity over the same point, 
meaning that for some neighborhood $U$ there are several components $V$ of $f^{-1}(U)$
with the above property.

We note that if $f$ is an entire function in $\EL$, then all singularities 
over $\infty$ are logarithmic.

We summarize some results about the singularities of the inverse 
of the  functions $f_\lambda$ given by~\eqref{b7}.
\begin{lemma}\label{la-ex1}
The function $f_\lambda$ has two logarithmic singularities over $\infty$ 
and one logarithmic singularity over each of the points
$\lambda\sqrt{\pi}/2$ and $-\lambda\sqrt{\pi}/2$.
There are no other finite singularities of $f_\lambda^{-1}$; that is, 
$\sing(f_\lambda^{-1})=\{\pm\lambda\sqrt{\pi}/2\}$.

For a neighborhood $U$ of $\lambda\sqrt{\pi}/2$ contained in $\H_{>0}$
the corresponding tract $V$ satisfies $V\subset\H_{>0}$ if $\lambda>0$
and $V\subset\H_{<0}$ if $\lambda<0$

Moreover, $f_\lambda(z)\neq \lambda\sqrt{\pi}/2$ for $z\in \H_{>0}$.
\end{lemma}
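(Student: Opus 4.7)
The plan is to exploit that $f_\lambda'(z)=\lambda e^{-z^2}$ is zero-free, so $f_\lambda$ has no critical points and every element of $\sing(f_\lambda^{-1})$ is either a finite asymptotic value or lies over~$\infty$. I would apply the framework of Proposition~\ref{prop1} with $p(t)\equiv\lambda$ and $q(t)=-t^2$: here $d=2$ and the leading coefficient of $q$ is $c=-1$, giving $\arg c=\pi$ and asymptotic directions $\phi_k\in\{0,\pi\}$. Path deformation to the real axis together with $\int_0^\infty e^{-t^2}\,dt=\sqrt{\pi}/2$ shows $f_\lambda(z)\to\lambda\sqrt{\pi}/2$ as $z\to\infty$ through $|\arg z|<\pi/4$, and the oddness $f_\lambda(-z)=-f_\lambda(z)$ gives $-\lambda\sqrt{\pi}/2$ through $|\arg z-\pi|<\pi/4$. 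In the remaining sectors $|\arg z\mp\pi/2|<\pi/4$ one has $\re(-z^2)>0$, so $|e^{-z^2}|\to\infty$, forcing $|f_\lambda|\to\infty$ and giving exactly two tracts over~$\infty$. This identifies $\sing(f_\lambda^{-1})=\{\pm\lambda\sqrt{\pi}/2\}$ and the four tracts.

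The central and hardest step is to prove that $f_\lambda(z)\neq\lambda\sqrt{\pi}/2$ throughout $\H_{>0}$. My approach is to introduce the auxiliary entire function
\[
F(z):=e^{z^2}\!\left(\frac{\sqrt{\pi}}{2}-\frac{f_\lambda(z)}{\lambda}\right)=e^{z^2}\!\int_{z}^{\infty}e^{-t^2}\,dt=\int_{0}^{\infty}e^{-2zs-s^2}\,ds,
\]
where the last representation is valid for $\re z>0$. Zeros of $F$ in $\H_{>0}$ correspond exactly to solutions of $f_\lambda(z)=\lambda\sqrt{\pi}/2$, so it suffices to count them by the argument principle on the right half-disk $D_R:=\H_{>0}\cap\{|z|<R\}$. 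On the imaginary segment the explicit formula $F(iy)=e^{-y^2}(\sqrt{\pi}/2-i\int_0^y e^{s^2}\,ds)$ has strictly positive real part, so $\arg F(iy)\in(-\pi/2,\pi/2)$ is strictly monotone in $y$, yielding a total argument variation of $\pi$. On the semicircle $\{Re^{i\theta}:|\theta|\leq\pi/2\}$ the classical asymptotic $F(z)\sim 1/(2z)$ for the scaled complementary error function, valid in $|\arg z|<3\pi/4$, gives $\arg F(Re^{i\theta})\to-\theta$ uniformly as $R\to\infty$, so the counterclockwise traversal produces the opposite variation~$\pi$. The two contributions cancel, so $F$ has no zeros in $D_R$; letting $R\to\infty$ yields the claim. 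The delicate point is the uniform validity of $F(z)=1/(2z)+O(1/|z|^3)$ on the closed semicircle including neighborhoods of $\pm iR$, which follows from the standard asymptotic expansion of $\operatorname{erfc}$.

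For the tract location I would use that $f_\lambda(iy)=i\lambda\int_0^y e^{s^2}\,ds$ is purely imaginary, so for a sufficiently small disk $U\subset\H_{>0}$ about $\lambda\sqrt{\pi}/2$ (when $\lambda>0$) one has $U\cap i\R=\emptyset$ and therefore $f_\lambda^{-1}(U)$ is disjoint from the imaginary axis; each of its components lies in $\H_{>0}$ or in $\H_{<0}$. The tract $V$, namely the component on which $f_\lambda$ is a universal cover onto $U\setminus\{\lambda\sqrt{\pi}/2\}$, contains a tail of the positive real axis along which $f_\lambda\to\lambda\sqrt{\pi}/2$, and hence $V\subset\H_{>0}$. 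The case $\lambda<0$ is handled by the substitution $z\mapsto-z$ together with $f_\lambda(-z)=-f_\lambda(z)$, which swaps the two tracts and the two half-planes. Additional preimages of $\lambda\sqrt{\pi}/2$ in the opposite half-plane are simple (since $f_\lambda'$ is nowhere zero) and contribute only bounded disk components of $f_\lambda^{-1}(U)$, not the tract itself.
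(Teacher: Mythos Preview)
Your argument is correct and substantially more detailed than what the paper itself provides. The paper's proof of this lemma is essentially a list of citations: the singularity structure is declared ``well-known'' with a pointer to Nevanlinna's textbook, the tract location is dispatched in one line from the observation that $f_\lambda$ maps the imaginary axis into itself (the same fact you use), and the key assertion $f_\lambda(z)\neq\lambda\sqrt{\pi}/2$ on $\H_{>0}$ is attributed to a 1927 paper of Hille, with Nevanlinna's line-complex description offered as an alternative source. You instead give a self-contained argument-principle proof of Hille's result via the scaled complementary error function $F(z)=e^{z^2}\int_z^\infty e^{-t^2}\,dt$: on the diameter $\re F(iy)>0$ and the monotonicity of $\arg F(iy)$ give a net variation close to~$\pi$, while the uniform asymptotic $F(z)=1/(2z)+O(|z|^{-3})$ in $|\arg z|\leq\pi/2$ makes the semicircle contribute the opposite~$\pi$, so the winding number vanishes. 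This is a standard and clean way to recover the classical fact; it buys independence from the older literature at the cost of a short computation, whereas the paper keeps the exposition brief by outsourcing the analytic work to Hille and Nevanlinna. Your treatment of the finite singular set via the framework of Proposition~\ref{prop1} and of the tract location via the invariance of $i\R$ matches the paper's reasoning.
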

\begin{proof}
It is well-known that $f_\lambda$ has logarithmic singularities as stated 
and that $\sing(f_\lambda^{-1})=\{\pm\lambda\sqrt{\pi}/2\}$;
see, e.g., \cite[p.~290]{Nevanlinna1953}.
The claim that the tract $V$ is contained in the half-plane specified if $U\subset \H_{>0}$ follows since 
$f_\lambda$ maps the imaginary axis into itself.

The last claim was proved by Hille~\cite[p.~34]{Hille1927}.
(It also follows from the description of the line complex given in~\cite[p.~368]{Nevanlinna1932}
or \cite[p.~306]{Nevanlinna1953}.)
\end{proof}

In order to study the case $\lambda=-2$ appearing in Example~\ref{ex1}, it will 
be convenient to consider the case $\lambda=2$ first. 
Since $f_2'(x)=2\exp(-x^2)>0$ and 
$f_2''(x)=-4x\exp(-x^2)<0$ for $x>0$, the function $f_2$ is 
increasing and concave on the interval $(0,\infty)$.
Since $f'(0)=2>1$ and $\lim_{x\to\infty}f_2(x)=\sqrt{\pi}$ this implies that 
$f_2$ has a unique fixed point $z_0\in(0,\infty)$ 
and that for this fixed point $z_0$ we have $0<f'(z_0)<1$.
Thus $z_0$ is an attracting fixed point of~$f_2$.
A numerical computation yields that $z_0\approx 1.7487\dots$.

Since $f_2$ is odd, $z_1:=-z_0$ is also an attracting fixed point of~$f_2$.
The attracting basin for $z_1$ is the reflection of that for $z_0$ at the imaginary axis.
Noting that $f_{-2}(z)=-f_2(z)$ we find that $f_{-2}(z_0)=z_1$ and $f_{-2}(z_1)=z_0$.
Hence $z_0$ and $z_1$ are periodic points of period $2$ of~$f_{-2}$.
Since $f_{-2}^2=f_2^2$, the periodic points $z_0$ and $z_1$ are also attracting
for $f_{-2}$, 
and the attracting basins and immediate attracting basins
of $z_0$ and $z_1$ with respect to $f_{-2}$ agree with those with respect to~$f_{2}$.
Thus in order to prove~\eqref{b8} it suffices to show that
\begin{equation} \label{ve1}
\dim(\partial A^*(z_0,f_{2})\cap I(f_{2}))=2.
\end{equation}
We note that $f_2$ has logarithmic singularities over $\pm\sqrt{\pi}$.

\begin{lemma}\label{la-ex2}
$f^{-1}(A^*(z_0,f_{2}))\cap \H_{>0}= A^*(z_0,f_{2})$.
\end{lemma}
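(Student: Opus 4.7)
The plan is to prove both inclusions for $A := A^*(z_0, f_2)$. For the easy direction, I note that $i\R \subset J(f_2)$: $f_2$ maps $i\R$ to itself, with $0$ a repelling fixed point ($f_2'(0) = 2$) and all other orbits escaping to infinity. The connected Fatou component $A$, containing $z_0 \in (0, \infty)$, therefore lies in $\H_{>0}$. Combined with the standard invariance $f_2(A) \subset A$ for the immediate basin of a fixed point, this yields $A \subset f_2^{-1}(A) \cap \H_{>0}$.

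For the reverse inclusion, suppose $z \in \H_{>0}$ with $w := f_2(z) \in A$. By Lemma~\ref{la-ex1}, $\sqrt{\pi}$ has no preimage in $\H_{>0}$, so $w \neq \sqrt{\pi}$; and by the classical fact that the immediate basin of an attracting cycle contains a singularity of $f_2^{-1}$, together with $\sing(f_2^{-1}) = \{\pm\sqrt{\pi}\}$ and $A \subset \H_{>0}$, we have $\sqrt{\pi} \in A$. The idea is to lift a path from $w$ to a point close to $\sqrt{\pi}$ and use that any preimage in $\H_{>0}$ close to $\sqrt{\pi}$ must lie in the tract $T$ over $\sqrt{\pi}$, which in turn lies in $A$.

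Specifically, I would pick a path $\gamma \colon [0, 1] \to A \setminus \{\sqrt{\pi}\}$ from $w$ to a point in $D(\sqrt{\pi}, \varepsilon) \cap A$ (possible since $A \setminus \{\sqrt{\pi}\}$ is path-connected), with $\varepsilon > 0$ small. Since $f_2$ has no critical points, there is a unique maximal continuous lift $\tilde\gamma \colon [0, t^*) \to \H_{>0}$ with $\tilde\gamma(0) = z$ and $f_2 \circ \tilde\gamma = \gamma|_{[0,t^*)}$. I claim $t^* = 1$: as $t \to t^*$, the discreteness of $f_2^{-1}(\gamma(t^*))$ and the continuity of $f_2$ force the cluster set of $\tilde\gamma(t)$ in $\overline{\H_{>0}} \cup \{\infty\}$ to be a single point $\xi$. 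The case $\xi \in i\R$ is excluded because then $f_2(\xi) \in i\R$ while $\gamma(t^*) \in A \subset \H_{>0}$; the case $\xi = \infty$ would realize $\gamma(t^*)$ as an asymptotic value of $f_2$ along a path to $\infty$ in $\H_{>0}$, forcing $\gamma(t^*) = \sqrt{\pi}$, contrary to $\gamma(t^*) \neq \sqrt{\pi}$. Hence $\xi \in \H_{>0}$, the lift extends past $t^*$, and by maximality $t^* = 1$.

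The lift $\tilde\gamma$ is connected and lies in $f_2^{-1}(F(f_2)) = F(f_2)$, so it lies in a single Fatou component. For $\varepsilon$ small enough, $f_2^{-1}(D(\sqrt{\pi}, \varepsilon)) \cap \H_{>0}$ is exactly the tract $T$ over $\sqrt{\pi}$: on any compact $K \subset \H_{>0}$ one has $\inf_K |f_2 - \sqrt{\pi}| > 0$ by Lemma~\ref{la-ex1}, so preimages accumulate only at $\infty$ along the sector where $f_2 \to \sqrt{\pi}$. Since $T$ contains $(M, \infty)$ for large $M$ and $(0, \infty) \subset A$, connectedness of $T$ in the Fatou set gives $T \subset A$. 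Therefore $\tilde\gamma(1) \in T \subset A$, the whole lift lies in $A$, and $z = \tilde\gamma(0) \in A$. The main obstacle is the maximality claim $t^* = 1$; the escape-to-infinity case critically uses that $\sqrt{\pi}$ is the only asymptotic value reached from within $\H_{>0}$, as encoded by Lemma~\ref{la-ex1}.
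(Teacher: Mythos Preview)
Your argument is correct. Both inclusions are established, and the harder direction is handled by a careful path-lifting argument that ultimately places the lift inside the tract over~$\sqrt{\pi}$, which you correctly identify as a subset of~$A$. The case analysis for the cluster point $\xi$ is sound; the key input in each case (that $f_2(i\R)\subset i\R$, and that $\sqrt{\pi}$ is the only asymptotic value reachable along a path tending to~$\infty$ in $\H_{>0}$) follows from Lemma~\ref{la-ex1} together with the odd symmetry of~$f_2$.

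The paper takes a different and shorter route. Rather than lifting a path from an arbitrary point, it classifies \emph{all} components of $f_2^{-1}(A)$ at once via covering theory: since $A$ is simply connected, contains exactly one singular value $\sqrt{\pi}$, and $f_2$ has no critical points, the components of $f_2^{-1}(A)$ consist of one tract $V$ over the logarithmic singularity $\sqrt{\pi}$ together with components each containing a preimage of~$\sqrt{\pi}$. Since all such preimages lie in $\H_{<0}$ by Lemma~\ref{la-ex1}, and $A$ itself is a component of $f_2^{-1}(A)$ contained in $\H_{>0}$, one gets $A=V$ and that $V$ is the unique component meeting~$\H_{>0}$. Your proof is essentially an explicit, pointwise unwinding of this covering-theoretic classification: the path-lifting argument proves by hand that any point of $f_2^{-1}(A)\cap\H_{>0}$ lies in the same component as the tract. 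The paper's version is cleaner and avoids the analysis of the maximal lift, while yours has the merit of being entirely elementary and not invoking the structure theory of components over a logarithmic singularity.
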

\begin{proof}
Since $f_2$ leaves the imaginary axis invariant, $A(z_0,f_{2})$ does not intersect the 
imaginary axis so that $A^*(z_0,f_{2})\subset \H_{>0}$. 
As every periodic Fatou component of an entire function,
$A^*(z_0,f_{2})$ is simply connected.
Next we note that $\sqrt{\pi}\in A^*(z_0,f_2)$.
This is easily checked directly, but also follows from the result in complex dynamics
mentioned already in~\eqref{b6} which says
 that an immediate attracting basin contains a singularity of the inverse.
We conclude that one component of $f^{-1}(A^*(z_0,f_{2}))$ is a tract $V$ over the logarithmic 
singularity $\sqrt{\pi}$, as explained at the beginning of this section.
In fact, $V=A^*(z_0,f_{2})$.
All other components of $f^{-1}(A^*(z_0,f_{2}))$ contain a preimage of~$\sqrt{\pi}$.
Since all preimages of $\sqrt{\pi}$ are contained in $\H_{<0}$ by Lemma~\ref{la-ex1},
we find that $V=A^*(z_0,f_{2})$ is the only component of $f^{-1}(A^*(z_0,f_{2}))$ that intersects~$\H_{>0}$.
\end{proof}
The conclusion of Lemma~\ref{la-ex2} can also be expressed as
\begin{equation} \label{ve4}
A^*(z_0,f_{2})=\{z\in  A(z_0,f_{2})\colon f_2^n(z)\in \H_{>0}\ \text{for all}\ n\geq 0\}.
\end{equation}
Morosawa~\cite{Morosawa2004} showed that if a function of the form $f_\lambda(z)+\mu$, 
with $\lambda\in\R\setminus\{0\}$ and $\mu\in\R\cup i\R$, has two attracting fixed points or an 
attracting periodic point of period~$2$, then the boundaries of 
the immediate attracting basins contain a common curve.
It is easy to see that for symmetry reasons this common curve is the imaginary axis
in our example.
Thus we have the following result.
\begin{lemma}\label{la-ex3}
$\{iy\colon y\in\R\}\subset \partial A^*(z_0,f_{2})$.
\end{lemma}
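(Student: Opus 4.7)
The plan is to invoke Morosawa's theorem (as stated in the excerpt just before the lemma) applied to $f_2$ itself and then pin down the common boundary curve by a half-plane/symmetry argument. The function $f_2$ fits Morosawa's hypothesis (it has the form $f_\lambda+\mu$ with $\lambda=2\in\R\setminus\{0\}$ and $\mu=0\in\R\cup i\R$), and by the analysis preceding the lemma it has two attracting fixed points $z_0$ and $-z_0$. Morosawa's theorem therefore supplies a curve $\Gamma$, unbounded at both ends, contained in $\partial A^*(z_0,f_2)\cap\partial A^*(-z_0,f_2)$.

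Next I would locate $\Gamma$. The excerpt already observes that $A^*(z_0,f_2)\subset\H_{>0}$, the argument being that $f_2$ preserves $i\R$ and hence $A(z_0,f_2)$ cannot meet $i\R$. Since $f_2$ is odd, $A^*(-z_0,f_2)=-A^*(z_0,f_2)\subset\H_{<0}$ by the same reasoning. Taking boundaries yields
\begin{equation}
\partial A^*(z_0,f_2)\cap\partial A^*(-z_0,f_2)\subset\overline{\H_{>0}}\cap\overline{\H_{<0}}=i\R,
\end{equation}
so in particular $\Gamma\subset i\R$.

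Finally, $\Gamma$ is a connected subset of $i\R$ that is unbounded at both ends, which forces $\Gamma=i\R$ as sets. This gives $i\R\subset\partial A^*(z_0,f_2)$, which is what Lemma~\ref{la-ex3} asserts. The only real content is Morosawa's theorem itself; the rest is the two-line symmetry/half-plane observation. Accordingly, the one place where care is needed is in citing Morosawa with the right strength: what we need is that the common curve supplied by \cite{Morosawa2004} is unbounded at both ends, for only then does containment in $i\R$ immediately upgrade to equality with $i\R$. If Morosawa's statement should give merely a continuum in the common boundary, one could still finish by noting that the continuum lies in $i\R$ and then iterating $f_2$ (which expands on $i\R$) to spread the continuum across all of $i\R$ and taking closures.
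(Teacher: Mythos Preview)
Your proposal is correct and follows essentially the same route as the paper: the paper also cites Morosawa's theorem and then remarks that ``for symmetry reasons this common curve is the imaginary axis in our example,'' which is precisely your half-plane argument $\partial A^*(z_0,f_2)\cap\partial A^*(-z_0,f_2)\subset\overline{\H_{>0}}\cap\overline{\H_{<0}}=i\R$. Your additional care about upgrading from ``a curve in $i\R$'' to ``all of $i\R$'' (via unboundedness at both ends, or alternatively via the expansion $|f_2'(iy)|=2e^{y^2}\geq 2$ on the imaginary axis together with the forward invariance of $\partial A^*(z_0,f_2)$) is a detail the paper leaves implicit.
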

Combining the last two lemmas we obtain the following result.
\begin{lemma}\label{la-ex4}
$\partial A^*(z_0,f_{2})=\{z\in \H_{\geq 0}\cap J(f_2)\colon f_2^n(z)\in \H_{\geq 0}\ \text{for all}\ n\geq 0\}$.
\end{lemma}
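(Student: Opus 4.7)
My plan is to establish both inclusions of Lemma~\ref{la-ex4} using Lemmas~\ref{la-ex2} and~\ref{la-ex3} together with a backward-invariance \emph{Key Lemma}. The inclusion $\subset$ is short: $A^*(z_0,f_2)\subset\H_{>0}$ is forward invariant since $z_0$ is a fixed point, so by continuity $f_2(\overline{A^*(z_0,f_2)})\subset\overline{A^*(z_0,f_2)}$ and therefore $f_2(\partial A^*(z_0,f_2))\subset\overline{A^*(z_0,f_2)}\cap J(f_2)=\partial A^*(z_0,f_2)$. Combined with $\partial A^*(z_0,f_2)\subset J(f_2)\cap\H_{\geq 0}$ (the boundary of a Fatou component lies in the Julia set, and $A^*(z_0,f_2)\subset\H_{>0}$), this puts every $z\in\partial A^*(z_0,f_2)$ in the right-hand side.

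The Key Lemma asserts that $f_2^{-1}(\partial A^*(z_0,f_2))\cap\H_{\geq 0}=\partial A^*(z_0,f_2)$. Since $f_2'(z)=2e^{-z^2}$ never vanishes, $f_2$ is a local biholomorphism, which gives $f_2^{-1}(\partial A^*)=\partial(f_2^{-1}(A^*))$. By Lemma~\ref{la-ex2}, every component of $f_2^{-1}(A^*(z_0,f_2))$ other than $A^*(z_0,f_2)$ itself lies in $\H_{<0}$; hence both its boundary and any accumulation of such components can meet $\H_{\geq 0}$ only in $i\R$, which by Lemma~\ref{la-ex3} is already contained in $\partial A^*(z_0,f_2)$. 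This proves the nontrivial inclusion of the Key Lemma; the reverse is the forward invariance established above.

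For the inclusion $\supset$ of Lemma~\ref{la-ex4}, let $z$ lie in the right-hand side. Iterating the Key Lemma backward along the orbit of $z$, which stays in $\H_{\geq 0}$ by hypothesis, it suffices to show that $f_2^n(z)\in\partial A^*(z_0,f_2)$ for some $n\geq 0$. If $z\in i\R$, or if the orbit of $z$ ever meets $i\R$, this is immediate from Lemma~\ref{la-ex3}. The remaining case is $z\in\H_{>0}$ with the entire orbit strictly in $\H_{>0}$; here I show $z\in\overline{A^*(z_0,f_2)}$, so that together with $z\in J(f_2)$ one concludes $z\in\partial A^*(z_0,f_2)$. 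Suppose, for contradiction, that some $D(z,r)\subset\H_{>0}$ is disjoint from $\overline{A^*(z_0,f_2)}$. Since $z\in J(f_2)=\partial A(z_0,f_2)$, this disk contains points $w\in A(z_0,f_2)\setminus A^*(z_0,f_2)$, each lying in some component $V$ of $A(z_0,f_2)\setminus A^*(z_0,f_2)$ inside $\H_{>0}$; by Lemma~\ref{la-ex2}, the smallest $n_V\geq 2$ with $f_2^{n_V}(V)=A^*(z_0,f_2)$ satisfies $f_2^{n_V-1}(V)\subset\H_{<0}$, so $f_2^{n_V-1}(w)\in\H_{<0}$.

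Taking $w_k\to z$ with $w_k\in V_k$: if the $n_{V_k}$ remain bounded, passing to a subsequence with $n_{V_k}=N$ constant and applying continuity of $f_2^{N-1}$ at $z$ gives $f_2^{N-1}(z)\in\overline{\H_{<0}}$, contradicting the strict $\H_{>0}$ hypothesis. The main technical obstacle is the sub-case $n_{V_k}\to\infty$; I expect it to be resolved by combining the blow-up property of $J(f_2)$ at $z$, the Koebe distortion theorem applied to the inverse branches of $f_2^n$ at $z$ (which are defined on simply-connected subdomains avoiding $\{\pm\sqrt{\pi}\}$ because $f_2$ has no critical points), and the iterated version of Lemma~\ref{la-ex2}, which together should produce points of $A^*(z_0,f_2)$ arbitrarily close to $z$, again contradicting $D(z,r)\cap\overline{A^*(z_0,f_2)}=\emptyset$.
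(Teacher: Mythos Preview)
Your forward-invariance argument for the inclusion $\subset$ and your Key Lemma are both correct. The gap is in the reverse inclusion, in the sub-case where the entire orbit of $z$ lies strictly in $\H_{>0}$ and the landing times $n_{V_k}$ are unbounded. You acknowledge this yourself (``I expect it to be resolved by \dots''), and the sketch you give---Koebe distortion on inverse branches, iterated Lemma~\ref{la-ex2}---is not a proof; it is not clear how these ingredients would combine to produce points of $A^*(z_0,f_2)$ near~$z$.

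The paper's argument avoids this case distinction entirely by a direct use of the blow-up property. Given $\xi$ in the right-hand set with $\xi\in\H_{>0}$, fix any neighborhood $U\subset\H_{>0}$ of~$\xi$. Since $\xi\in J(f_2)$, Montel's theorem forces some iterate $f_2^n(U)$ to leave $\H_{>0}$; take $n$ minimal. Then $f_2^n(U)$ is a connected open set containing $f_2^n(\xi)\in\H_{\geq 0}$ and a point outside $\H_{>0}$, hence it meets the imaginary axis. By Lemma~\ref{la-ex3} the imaginary axis lies in $\partial A^*(z_0,f_2)$, so the open set $f_2^n(U)$ already meets $A^*(z_0,f_2)$ itself: there is $z'\in U$ with $f_2^n(z')\in A^*(z_0,f_2)$. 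Minimality of $n$ gives $f_2^k(z')\in\H_{>0}$ for $0\le k<n$, and iterating Lemma~\ref{la-ex2} backwards yields $z'\in A^*(z_0,f_2)$. Thus every neighborhood of $\xi$ meets $A^*(z_0,f_2)$, so $\xi\in\partial A^*(z_0,f_2)$. Notice that this argument never looks at other components of $A(z_0,f_2)$ at all, which is exactly what lets it bypass your unbounded-$n_{V_k}$ difficulty.
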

\begin{proof}
Let $X:=\{z\in \H_{\geq 0}\cap J(f_2)\colon f_2^n(z)\in \H_{\geq 0}\ \text{for all}\ n\geq 0\}$.
Lemma~\ref{la-ex2} yields that $\partial A^*(z_0,f_{2})\subset X$.

To prove the opposite inclusion, let $\xi\in X$.
In view of Lemma~\ref{la-ex3} we may assume that $\xi \in \H_{>0}$.
Let $U$ be a neighborhood of~$\xi$ contained in $\H_{>0}$.
Since $\xi\in J(f_2)$, there exists $n\in\N$ such that $f_2^n(U)\not\subset \H_{>0}$.
Let $n$ be minimal with this property.
Since $f_2^n(\xi)\in \H_{>0}$ we find that $f_2^n(U)$ intersects the imaginary axis.
Lemma~\ref{la-ex3} yields that there exists $z\in U$ with $f_2^n(z)\in A^*(z_0,f_{2})$.
It now follows from Lemma~\ref{la-ex2} that $z\in A^*(z_0,f_{2})$.
As $U$ can be taken as an arbitrarily small neighborhood of~$\xi$, we conclude 
that $\xi\in\partial A^*(z_0,f_{2})$.
\end{proof}
We now describe how~\eqref{ve1} can be deduced from Lemma~\ref{la-ex4}.
We already mentioned in the introduction that McMullen~\cite[Theorem~1.2]{McMullen1987}
 proved that $\dim J(\lambda e^z)=2$ for all $\lambda\in\C\setminus\{0\}$.
He actually showed that $\dim I(\lambda e^z)=2$ and used that
$I(\lambda e^z)\subset J(\lambda e^z)$.
Taniguchi~\cite[Theorem~1]{Taniguchi2003} extended this result to functions of the 
form~\eqref{b3}. Bara\'nski~\cite[Theorem~A]{Baranski2008} and
Schubert~\cite{Schubert2007} showed that it holds for functions in $\EL$
of finite order. Note that $I(f)\subset J(f)$ for $f\in\EL$, as was 
proved by Eremenko and Lyubich~\cite[Theorem~1]{Eremenko1992} using the logarithmic 
change of variable.
Thus we have the following result.
\begin{lemma}\label{la-ex5}
Let $f\in\EL$ be of finite order. Then $\dim I(f)=2$.
Moreover, for every tract $U$ of $f$ the set of all points $z\in U\cap I(f)$
for which $f^n(z)\in U$ for all $n\in\N$ has Hausdorff dimension~$2$.
\end{lemma}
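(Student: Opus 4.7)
The first assertion, $\dim I(f)=2$, is the theorem of Bara\'nski~\cite{Baranski2008} and Schubert~\cite{Schubert2007}, which I would simply quote. The work lies entirely in the second assertion, where the plan is to revisit their construction and observe that the dimension-$2$ Cantor set they build already lives inside a single tract.

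Working in the logarithmic change of variable of \S\ref{log-change}, I would fix a tract $U$ of $f$, pick a component $L$ of $\exp^{-1}(U)$, and set $\widetilde L:=\bigcup_{k\in\Z}(L+2\pi ik)=\exp^{-1}(U)$. Since $F\colon L\to\H_{>s}$ is biholomorphic, the same is true of each translate $L+2\pi ik\to\H_{>s}$, giving for every $k\in\Z$ a univalent inverse branch $G_k\colon\H_{>s}\to L+2\pi ik$ of $F$. Any composition $G_{k_n}\circ\dots\circ G_{k_1}$ then produces an inverse image under $F^n$ whose entire orbit lies in $\widetilde L$, and applying $\exp$ gives inverse images under $f^n$ whose orbit stays in $U$. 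It therefore suffices to construct, for $M$ arbitrarily large, a Cantor-like subset of
\begin{equation}
\{w\in\widetilde L\colon F^n(w)\in\widetilde L\text{ for all }n\geq 0,\ \re F^n(w)\to\infty\}
\end{equation}
of Hausdorff dimension~$2$.

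The construction is the standard McMullen-type scheme used in~\cite{Baranski2008,Schubert2007}. Starting from a sufficiently large box $B_0\subset L\cap\H_{>M}$, I would build inductively a nested family of disjoint sub-boxes; at stage $n$, given a box $B$ on which $F^n$ acts univalently with image a large rectangle $R\subset\H_{>s}$, pull back via the branches $G_k$ a collection tiling a fundamental strip of $R$. The number of lateral branches contributing at each stage is comparable to $\re R/2\pi$, the contraction of each $G_k$ at $u$ is at most $4\pi/(\re u-s)$ by~\eqref{l3a}, and Koebe's theorem controls distortion uniformly along each branch. The finite-order hypothesis enters through Lemma~\ref{la-geom}: at every large height, a positive proportion of the translates $L+2\pi ik$ actually meet the target rectangle, so enough branches land inside $\widetilde L$ rather than escaping the super-tract.

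The resulting Cantor set carries a natural mass distribution whose Hausdorff dimension is computed by balancing branch count against diameter contraction; a Frostman-type estimate then yields dimension~$2$. The main obstacle is the sharpness of this balance: getting dimension exactly $2$, rather than $2-\varepsilon$, demands a careful matching of scales between the exponential growth of branch count and the polynomial (essentially $1/\re w$) contraction of each $G_k$, and it is this quantitative matching that is the heart of~\cite{Baranski2008,Schubert2007}. The payoff is that the whole construction stays inside the single super-tract $\widetilde L$, so applying $\exp$ yields a subset of $U\cap I(f)$ of Hausdorff dimension~$2$ whose $f$-orbit remains in~$U$, as required.
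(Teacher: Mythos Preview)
Your proposal is correct and aligns with the paper's treatment: the paper does not give a proof of this lemma at all but simply cites McMullen, Taniguchi, Bara\'nski, Schubert, and Bergweiler--Karpi\'nska--Stallard, remarking that although those results are stated for $J(f)$, ``the proofs also yield the above result'' and in particular produce the dimension-$2$ set inside a single tract. Your sketch is thus a faithful (and more detailed) elaboration of exactly what the paper appeals to; the only minor difference is that you invoke Lemma~\ref{la-geom} for the geometric input whereas the paper simply defers to the cited sources.
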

In the papers cited the result is only stated with $J(f)$ instead of $I(f)$,
but the proofs also yield the above result.
For a statement (and proof) of the above result with $I(f)$ instead of $J(f)$ we also
refer to~\cite[Theorem~1.2]{Bergweiler2009}. There it is shown that 
the condition that $f$ has finite order can be weakened.

The function $f_2$ has two tracts, one containing $\{iy\colon y>y_0\}$ and one
containing $\{iy\colon y<-y_0\}$ for some $y_0>0$.
The tracts are symmetric with respect to the imaginary axis.
Lemma~\ref{la-ex4} says that in order to estimate  the Hausdorff dimension
of $\partial A^*(z_0,f_{2})$, we have to consider the set of escaping points which stay 
in a ``half-tract'' (or, more precisely, the set of points staying in two 
``half-tracts'').

However, the methods used to prove Lemma~\ref{la-ex5} show with only 
very minor modifications that for a tract symmetric to some ray 
the set of points escaping in a corresponding ``half-tract'' also has Hausdorff dimension~$2$.
We omit the details.
In other words, Lemma~\ref{la-ex4} together with the arguments 
in the proof of Lemma~\ref{la-ex5} show that~\eqref{ve1} and thus~\eqref{b8} hold.
Thus $f_{-2}$ has the properties claimed in Example~\ref{ex1}.

\noindent
Walter Bergweiler\\
Mathematisches Seminar\\
Christian-Albrechts-Universit\"at zu Kiel\\
Heinrich-Hecht-Platz 6\\
24098 Kiel\\
Germany

\medskip

\noindent
Jie Ding\\
School of Mathematics\\
Taiyuan University of Technology\\
Taiyuan 030024\\
China

\end{document}